\theoremstyle{plain}
\newtheorem{theorem}{Theorem}[section]
\newtheorem{proposition}[theorem]{Proposition}
\newtheorem{lemma}[theorem]{Lemma}
\newtheorem{corollary}[theorem]{Corollary}
\theoremstyle{definition}
\newtheorem{definition}[theorem]{Definition}
\newtheorem{remark}[theorem]{Remark}
\newtheorem{assumption}[theorem]{Assumption}
\theoremstyle{remark}
\renewenvironment{thebibliography}[1]{%
\begin{oldthebibliography}{#1}%
\setlength{\baselineskip}{1em}
\linespread{.2}
\small
\setlength{\parskip}{0.25ex}%
\setlength{\itemsep}{.20em}%
}%
{%
\end{oldthebibliography}%
}
\newcommand{\eps}{\varepsilon}
\newcommand{\N}{\mathbb{N}}
\newcommand{\R}{\mathbb{R}}
\newcommand{\X}{\mathsf{X}}
\newcommand{\Y}{\mathsf{Y}}
\newcommand{\cB}{\mathcal{B}}
\newcommand{\cC}{\mathcal{C}}
\newcommand{\cP}{\mathcal{P}}
\DeclareMathOperator{\proj}{proj}
\DeclareMathOperator{\spt}{spt}
\newcommand{\bz}{\bar{z}}
\newcommand{\mykill}[1]{}
\numberwithin{equation}{section}
\begin{document}

\title{\vspace{-1em}
 Stability of Entropic Optimal Transport\\and Schr\"odinger Bridges%
 \thanks{
 The authors are grateful to Julio Backhoff-Veraguas and Giovanni Conforti for insightful discussions that greatly contributed to this research.}
 }
\date{\today}
\author{
  Promit Ghosal%
  \thanks{Department of Mathematics, Massachusetts Institute of Technology, promit@mit.edu.
  }
  \and
  Marcel Nutz%
  \thanks{
  Departments of Statistics and Mathematics, Columbia University, mnutz@columbia.edu. Research supported by an Alfred P.\ Sloan Fellowship and NSF Grants DMS-2106056, DMS-1812661.}
  \and
  Espen Bernton%
  \thanks{
  Department of Statistics, Columbia University, eb3311@columbia.edu.
  }
  \and
  }
\maketitle \vspace{-1.2em}

\begin{abstract}
We establish the stability of solutions to the entropically regularized optimal transport problem with respect to the marginals and the cost function. The result is based on the geometric notion of cyclical invariance and inspired by the use of $c$-cyclical monotonicity in classical optimal transport. As a consequence of stability, we obtain the wellposedness of the solution in this geometric sense, even when all transports have infinite cost. More generally, our results apply to a class of static Schr\"odinger bridge problems including entropic optimal transport.
\end{abstract}

\vspace{.3em}

{\small
\noindent \emph{Keywords} Entropic Optimal Transport; Schr\"odinger Bridge; Stability; Sinkhorn's Algorithm

\noindent \emph{AMS 2010 Subject Classification}
90C25; %
49N05 %
}
\vspace{.6em}

\section{Introduction}\label{se:intro}

Computational progress has lead to manifold applications of optimal transport in high-dimensional problems ranging from machine learning and statistics to image and language processing (e.g., \cite{WGAN.17,ChernozhukovEtAl.17,RubnerTomasiGuibas.00,AlvarezJaakkola.18}).
In this context, entropic regularization is crucial to enable efficient large-scale computation via Sinkhorn's algorithm, hence has become the focus of dozens of recent studies. We refer to \cite{CuturiPeyre.19} for a survey with numerous references.

Our main contribution is the stability of solutions to the entropically regularized optimal transport problem with respect to the marginals and the cost function. Parallel to the fundamental stability theorem in classical optimal transport, it justifies, for example, that approximations found by solving discretized problems indeed converge to the true solution when the cost function  is continuous.
Our results are stated in terms of cyclical invariance, a geometric notion inspired by the $c$-cyclical monotonicity property in classical optimal transport.
When the entropic transport problem has finite value, a coupling is cyclically invariant if and only if it is an optimal transport. Our stability theorem entails a general wellposedness result beyond the realm of  optimization: cyclical invariance singles out a unique coupling even if the transport problem has infinite value---i.e., all couplings have infinite cost---and therefore the paradigm of cost minimization does not differentiate couplings from one another. 

For ease of exposition, the Introduction focuses on entropic optimal transport. More general results are stated in Section~\ref{se:mainResults} using the language of Schr\"odinger bridges that turns out to be natural for our approach.
Given a measurable cost function $c: \X\times\Y\to\R_{+}$ on Polish probability spaces $(\X,\mu)$ and $(\Y,\nu)$, we consider the entropic optimal transport problem with regularization parameter $\eps>0$,
\begin{equation}\label{eq:EOT}
  \inf_{\pi\in\Pi(\mu,\nu)} \int_{\X\times\Y} c \,d\pi + \eps H(\pi|P), \quad P:=\mu\otimes\nu,
\end{equation}
where $\Pi(\mu,\nu)$ is the set of couplings and $H(\cdot|P)$ denotes relative entropy (or Kullback--Leibler divergence) with respect to the \textbf{p}roduct~$P$ of the marginals, defined as 
$H(\pi|P):=\int \log (\frac{d\pi}{dP}) \,d\pi$ for $\pi\ll P$ and $H(\pi|P):=\infty$ otherwise.
If the minimization~\eqref{eq:EOT} is finite; i.e., if
\begin{equation}\label{eq:finitenessCondEOT}
\mbox{there exists $\pi_{0}\in\Pi(\mu,\nu)$ with $\int c \,d\pi_{0}+H(\pi_{0}|P)<\infty$,}
\end{equation}
then it admits a unique minimizer $\pi\in\Pi(\mu,\nu)$ and moreover $\pi\sim P$.

\begin{definition}\label{de:cyclInvEOT}
  A coupling $\pi\in\Pi(\mu,\nu)$ is called \emph{$(c,\eps)$-cyclically invariant} if $\pi\sim P$ and its density admits a version $d\pi/dP: \X\times\Y\to(0,\infty)$ such that
\begin{equation*}%
  \prod_{i=1}^{k} \frac{d\pi}{dP}(x_{i},y_{i})=\exp\bigg(\!-\frac{1}{\eps}\bigg[\sum_{i=1}^{k}  c(x_{i},y_{i}) - \sum_{i=1}^{k}  c(x_{i},y_{i+1})\bigg]\bigg)\prod_{i=1}^{k} \frac{d\pi}{dP}(x_{i},y_{i+1})
\end{equation*}  
 for all $k\in\N$ and $(x_{i},y_{i})_{i=1}^{k} \subset \X\times\Y$, where $y_{k+1}:=y_{1}$.
\end{definition} 

By way of a factorization property that is equivalent to cyclical invariance, known results imply the following relation to the optimization~\eqref{eq:EOT}. 

\begin{proposition}\label{pr:cyclInvEOT}
  Let~\eqref{eq:EOT} be finite. Then $\pi\in\Pi(\mu,\nu)$ is the minimizer of~\eqref{eq:EOT} if and only if~$\pi$ is $(c,\eps)$-cyclically invariant.
\end{proposition}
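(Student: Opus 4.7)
The plan is to reduce to the classical Schr\"odinger-system characterization of the minimizer of~\eqref{eq:EOT} by showing that $(c,\eps)$-cyclical invariance is equivalent to a multiplicative factorization of the density, namely the existence of measurable $f:\X\to\R$ and $g:\Y\to\R$ with
\[
\frac{d\pi}{dP}(x,y) = \exp\bigl(f(x)+g(y)-c(x,y)/\eps\bigr)
\]
for some everywhere positive version of the density.

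First, I would verify the direction ``factorization $\Rightarrow$ cyclical invariance.'' Substituting the factored density into both products in Definition~\ref{de:cyclInvEOT}, the $f(x_i)$ contributions cancel between the two sides, the identity $\sum_{i=1}^k g(y_i) = \sum_{i=1}^k g(y_{i+1})$ (cyclic rotation of indices, $y_{k+1}=y_1$) eliminates the $g$ terms, and what remains is exactly the cost difference $-\frac{1}{\eps}\sum_i (c(x_i,y_i)-c(x_i,y_{i+1}))$.

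For the converse ``cyclical invariance $\Rightarrow$ factorization,'' I would fix an arbitrary base point $(x_0,y_0)\in\X\times\Y$ (the distinguished version of the density is everywhere positive, so any base point will do) and apply the $k=2$ instance of the defining identity with $(x_1,y_1)=(x,y)$ and $(x_2,y_2)=(x_0,y_0)$. Rearranging the resulting pointwise equation yields the factorization with
\[
f(x) := \log\frac{d\pi}{dP}(x,y_0) + \frac{c(x,y_0)}{\eps}, \qquad g(y) := \log\frac{d\pi/dP(x_0,y)}{d\pi/dP(x_0,y_0)} + \frac{c(x_0,y)-c(x_0,y_0)}{\eps}.
\]
Measurability of $f$ and $g$ is inherited from that of the density and of $c$.

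Second, I would invoke the classical characterization of the minimizer of~\eqref{eq:EOT}: under the finiteness condition~\eqref{eq:finitenessCondEOT}, the unique minimizer exists, is equivalent to $P$, and its density admits a factorization as above; conversely, any $\pi\in\Pi(\mu,\nu)$ with $\pi\sim P$ whose density factorizes in this way must be the minimizer, by the uniqueness in the associated Schr\"odinger system (or, equivalently, by strict convexity of the entropy together with the primal-dual optimality conditions). Combining this with the equivalence established above yields both directions.

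The main obstacle is the cyclical-invariance-to-factorization step. Algebraically it is a short computation, but one must carefully exploit the everywhere-positive version of $d\pi/dP$ granted by Definition~\ref{de:cyclInvEOT} to legitimize pointwise evaluation at a single base point $(x_0,y_0)$, and then verify measurability of the resulting Schr\"odinger potentials. Everything else amounts to citing the standard optimality characterization and translating it into the language of cyclical invariance.
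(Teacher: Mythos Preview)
Your proposal is correct and follows essentially the same route as the paper: reduce $(c,\eps)$-cyclical invariance to the factorization $d\pi/dP=e^{f\oplus g-c/\eps}$, then invoke the classical characterization of the entropic minimizer via that factorization. The paper obtains the equivalence by citing Proposition~\ref{pr:weakIsStrong} (or \cite{BorweinLewis.92}), whereas you supply a direct $k=2$ base-point computation; since Definition~\ref{de:cyclInvEOT} grants an \emph{everywhere} positive version satisfying the identity at all points, your elementary argument is legitimate and slightly more self-contained for this strong form of cyclical invariance.
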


See Section~\ref{se:proofOfMain} for details and references. We are mainly interested in optimal transport problems on Euclidean spaces~$\X,\Y$. However, the only particular property of such spaces that plays a role for our analysis is that Lebesgue's theorem on the differentiation of measures holds. Thus, we postulate that property (see Assumption~\ref{as:LebesgueDiff}) and otherwise allow for a general Polish setting. We can now state the aforementioned wellposedness result.

\begin{theorem}[Wellposedness]\label{th:wellposednessEOT}
  Let $c:\X\times\Y\to[0,\infty)$ be continuous, $\eps>0$ and $(\mu,\nu)\in\cP(\X)\times\cP(\Y)$. There exists a unique $(c,\eps)$-cyclically invariant coupling $\pi\in\Pi(\mu,\nu)$. If~\eqref{eq:EOT} is finite, $\pi$ is its unique minimizer.
\end{theorem}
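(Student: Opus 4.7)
The plan is to reduce the existence question to the finite-cost setting by truncating $c$, invoke Proposition~\ref{pr:cyclInvEOT} there, and pass to the limit using stability of cyclical invariance under cost convergence; uniqueness will then follow from the factorization structure implicit in Definition~\ref{de:cyclInvEOT}. Concretely, I set $c_n:=c\wedge n$ for $n\in\N$, a bounded continuous cost. The product $P$ satisfies $\int c_n\,dP\le n$ and $H(P|P)=0$, so the finiteness condition~\eqref{eq:finitenessCondEOT} holds for $c_n$; by Proposition~\ref{pr:cyclInvEOT} there is a unique $(c_n,\eps)$-cyclically invariant coupling $\pi_n\in\Pi(\mu,\nu)$. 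The family $\{\pi_n\}$ is tight, so along a subsequence $\pi_n\to\pi_\star$ weakly in $\Pi(\mu,\nu)$. One then argues that $\pi_\star$ is $(c,\eps)$-cyclically invariant: the monotone pointwise convergence $c_n\uparrow c$ with $c$ continuous handles the cost side of the identity in Definition~\ref{de:cyclInvEOT}, while Assumption~\ref{as:LebesgueDiff} is invoked to select canonical pointwise-defined versions of $d\pi_n/dP$ and $d\pi_\star/dP$ whose cyclical identity survives the limit. This last step is exactly the paper's main stability result.

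For uniqueness, suppose $\pi,\pi'$ are both $(c,\eps)$-cyclically invariant. Applying the defining identity with $k=2$ to $h(x,y):=\log(d\pi/dP)(x,y)+c(x,y)/\eps$ yields $h(x_1,y_1)+h(x_2,y_2)=h(x_1,y_2)+h(x_2,y_1)$, which forces a splitting $h(x,y)=\varphi(x)+\psi(y)$ on a $P$-full-measure set; the same holds for $\pi'$ with potentials $\varphi',\psi'$. Hence $\log(d\pi/d\pi')=a(x)+b(y)$ $P$-a.e.\ with $a:=\varphi-\varphi'$, $b:=\psi-\psi'$. Integrating against the marginal constraints produces the Schr\"odinger fixed-point equations $e^{a(x)}\int e^{b(y)}\pi'(dy|x)=1$ and its symmetric counterpart in $y$, whose only measurable solutions force $a,b$ to be constants with $a+b\equiv 0$; thus $\pi=\pi'$. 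When~\eqref{eq:EOT} is finite, Proposition~\ref{pr:cyclInvEOT} identifies this unique $(c,\eps)$-cyclically invariant coupling with the minimizer, which finishes the theorem.

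The principal obstacle lies in the limit step of the existence argument. Cyclical invariance is a pointwise identity on densities rather than an integrated or variational one, so weak convergence $\pi_n\to\pi_\star$ alone is far from sufficient: one needs to exhibit a version of $d\pi_\star/dP$ for which the identity in Definition~\ref{de:cyclInvEOT} holds \emph{simultaneously} at every finite tuple $(x_i,y_i)_{i=1}^k$. Producing such a version out of weakly convergent $\pi_n$ is the geometric content of the paper's stability theorem, and Assumption~\ref{as:LebesgueDiff} is indispensable here: it allows the density to be recovered as a genuine pointwise limit of measure ratios on shrinking neighborhoods, giving a canonical representative that is compatible with the pointwise cyclical identity even when the limiting cost $c$ is unbounded and~\eqref{eq:EOT} fails to be finite.
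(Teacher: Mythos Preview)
Your existence argument is essentially the paper's own: approximate so that the problem becomes finite, obtain cyclically invariant $\pi_n$ from Proposition~\ref{pr:cyclInvEOT}, then invoke the stability theorem to pass to the limit. The paper phrases this in the Schr\"odinger-bridge language (truncating $dR/dP$ from below by $1/n$), which for $dR/dP=ae^{-c/\eps}$ amounts to replacing $c$ by $c\wedge (\eps\log(an))$; your truncation $c_n=c\wedge n$ is the same device up to reparametrization. One caveat: the hypothesis of Theorem~\ref{th:stabilityEOT} asks for uniform convergence of $c_n$ to $c$ on bounded sets, and $c\wedge n\to c$ need not be uniform on a bounded set where $c$ is unbounded. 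This is harmless because the underlying condition~\eqref{eq:mainCond} of Theorem~\ref{th:stability} is local at each $z\in\spt P$, and continuity of $c$ at $z$ gives $c_n=c$ on a neighborhood of $z$ for $n$ large; so the more general Theorem~\ref{th:stability} applies directly even if Theorem~\ref{th:stabilityEOT} as stated does not.

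The uniqueness argument has a genuine gap. You correctly extract the factorization $d\pi/d\pi'=e^{a(x)+b(y)}$ and the marginal identities $e^{a(x)}\int e^{b(y)}\pi'_x(dy)=1$, $e^{b(y)}\int e^{a(x)}\pi'_y(dx)=1$, but the assertion that ``the only measurable solutions force $a,b$ to be constants'' is precisely the nontrivial point, and it is not true for an arbitrary kernel in place of $\pi'$. What makes it work here is that $\pi'\in\Pi(\mu,\nu)$ is itself a coupling: the Schr\"odinger bridge problem with reference $\pi'$ has value $H(\pi'|\pi')=0<\infty$, so Proposition~\ref{pr:cyclInvEOT} (equivalently Lemma~\ref{le:uniquenessAndMinimizer}(ii)) applies with $\pi'$ as the reference measure and identifies $\pi$, whose density $d\pi/d\pi'$ factorizes, as the unique minimizer---namely $\pi'$. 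This is exactly how the paper closes the loop in Lemma~\ref{le:uniquenessAndMinimizer}(iii). Without this reduction (or an equivalent entropy argument showing $H(\pi|\pi')\le 0$, which in turn needs some care about integrability of $a,b$), your system of equations does not by itself force $a+b\equiv 0$.
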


Uniqueness follows from known facts and does not require the continuity of~$c$. On the other hand, existence beyond the framework of finite cost is a completely novel result. Rather than using convex analysis or variational arguments, it is based on the subsequent stability theorem for cyclical invariance. 
One example where wellposedness with infinite cost is of interest, is the statistical notion of rank recently proposed in~\cite{CuturiTeboulVert.19}.
Multivariate ranks have been defined in nonparametric statistics through Brenier's optimal transport map to extend the usual scalar notions and tests; see~\cite{ChernozhukovEtAl.17,DebSen.19,delBarrioEtAl.18,GhosalSen.19}. Leveraging the same idea but computationally less expensive, entropic optimal transport is used in~\cite{CuturiTeboulVert.19} to define ``differentiable ranks.'' Theorem~\ref{th:wellposednessEOT} allows one to naturally define such ranks for arbitrary distributions---like in the scalar case---without imposing a second moment condition.

\begin{theorem}[Stability]\label{th:stabilityEOT}
   For $n\geq 1$, let $(\mu_{n},\nu_{n})\in\cP(\X)\times\cP(\Y)$, let $\eps_{n}>0$ and let $c_{n}:\X\times\Y\to[0,\infty)$ be measurable. Let $\pi_{n}\in\Pi(\mu_{n},\nu_{n})$ be $(c_{n},\eps_{n})$-cyclically invariant. Suppose that $\mu_{n},\nu_{n}$ converge weakly to some limits $\mu,\nu$, that $\eps_{n}\to\eps>0$ and that $c_{n}$ converges uniformly on bounded sets to a continuous function~$c$. Then $\pi_{n}$ converges weakly to a limit $\pi\in\Pi(\mu,\nu)$ and $\pi$ is $(c,\eps)$-cyclically invariant.
\end{theorem}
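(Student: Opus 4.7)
The plan follows a tightness--uniqueness scheme. Weak convergence of $\mu_n, \nu_n$ makes $\{\pi_n\}$ tight, so any subsequence admits a further subsequential weak limit $\pi \in \Pi(\mu,\nu)$. If every such limit is shown to be $(c,\eps)$-cyclically invariant, Theorem~\ref{th:wellposednessEOT} forces all these limits to coincide and yields weak convergence of the full sequence. The task is thus to transfer cyclical invariance across the weak limit, and this is the heart of the argument.

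The pointwise identity in Definition~\ref{de:cyclInvEOT} is fragile under weak limits, but its $k=2$ integrated surrogate is robust. Writing $g_n := d\pi_n/dP_n$, the $k=2$ identity rearranges to
\[
g_n(x_1,y_1)g_n(x_2,y_2)\,e^{[c_n(x_1,y_1)+c_n(x_2,y_2)]/\eps_n} = g_n(x_1,y_2)g_n(x_2,y_1)\,e^{[c_n(x_1,y_2)+c_n(x_2,y_1)]/\eps_n}.
\]
Integrating over $A_1\times B_1\times A_2\times B_2$ against $dP_n^{\otimes 2}$ and applying Fubini (both sides decouple into products over two independent pairs of variables, using $g_n\,dP_n = d\pi_n$) produces the multiplicative identity
\[
I_n(A_1,B_1)\,I_n(A_2,B_2) = I_n(A_1,B_2)\,I_n(A_2,B_1),\qquad I_n(A,B) := \int_{A\times B} e^{c_n/\eps_n}\,d\pi_n.
\]
For bounded sets with $\mu(\partial A) = \nu(\partial B) = 0$, the integrand $\mathbf{1}_{A\times B}e^{c_n/\eps_n}$ is uniformly bounded and converges uniformly to $\mathbf{1}_{A\times B}e^{c/\eps}$ off $\partial(A\times B)$; combined with weak convergence of $\pi_{n_k}$ and $\pi(\partial(A\times B)) = 0$, this yields $I_{n_k}(A,B) \to I(A,B) := \int_{A\times B} e^{c/\eps}\,d\pi$, and the multiplicative identity passes to the limit.

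Choose bounded continuity sets $A_0, B_0$ with $\pi(A_0\times B_0) > 0$, which exist since $\pi$ is a probability and almost every ball has a marginal-null boundary. Define locally finite measures $\alpha$ on $\X$ and $\beta$ on $\Y$ by $\alpha(A) := I(A,B_0)$ and $\beta(B) := I(A_0,B)/I(A_0,B_0)$; the multiplicative identity reads $I(A,B) = \alpha(A)\beta(B)$ on bounded continuity sets and globalizes by a standard $\sigma$-compact extension to $e^{c/\eps}\,d\pi = d(\alpha\otimes\beta)$. Matching the $\mu,\nu$-marginals of $\pi$ gives $d\mu = \Phi\,d\alpha$ and $d\nu = \Psi\,d\beta$ with $\Phi(x) := \int e^{-c(x,y)/\eps}\,d\beta(y)$ and $\Psi(y) := \int e^{-c(x,y)/\eps}\,d\alpha(x)$, both strictly positive pointwise (boundedness of $c$ on $\{x\}\times B_0$ and $A_0\times\{y\}$ together with $\alpha(A_0), \beta(B_0) > 0$). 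Consequently $\pi \sim P$ admits the pointwise strictly positive version
\[
\frac{d\pi}{dP}(x,y) = \frac{e^{-c(x,y)/\eps}}{\Phi(x)\,\Psi(y)},
\]
which a direct computation shows satisfies the cyclical invariance identity of Definition~\ref{de:cyclInvEOT} for every $k$-tuple; uniqueness from Theorem~\ref{th:wellposednessEOT} then concludes. The main obstacle is the limit passage in the integrated identity: because $e^{c/\eps}$ is not globally bounded, only \emph{bounded} test sets admit a valid weak-convergence estimate, so the resulting multiplicative identity is only local; one must then globalize it to a product-measure factorization of $e^{c/\eps}\,d\pi$ and translate this factorization into a pointwise, strictly positive density.
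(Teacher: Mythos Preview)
Your argument is correct and takes a genuinely different route from the paper's. The paper proves the general Schr\"odinger-bridge stability (Theorem~\ref{th:stability}) by first establishing $\pi\ll P$ via a rectangle-comparison contradiction (Proposition~\ref{pr:limitAbsCont}), then obtaining a \emph{weak} cyclical invariance of the limit by blowing points up to balls, passing to the weak limit, and shrinking via differentiation of measures (Proposition~\ref{pr:limitEquiv}, relying on Assumption~\ref{as:LebesgueDiff}), and finally upgrading weak to strong invariance by a connecting-point argument (Proposition~\ref{pr:weakIsStrong}). Your approach instead exploits the specific EOT structure: multiplying the density identity by $e^{c_n/\eps_n}$ produces the clean multiplicative relation for $I_n(A,B)=\int_{A\times B}e^{c_n/\eps_n}\,d\pi_n$, which is robust under weak limits on bounded continuity rectangles and yields a product-measure factorization $e^{c/\eps}\,d\pi=\alpha\otimes\beta$ directly, from which $\pi\sim P$ and the factorized density drop out. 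This is more elementary for the EOT case---it sidesteps the Lebesgue-differentiation assumption and the two-stage weak/strong invariance machinery---but it does require $c$ (hence $e^{c/\eps}$) to be bounded on bounded sets, which uses local compactness of $\X\times\Y$ rather than Assumption~\ref{as:LebesgueDiff}. Conversely, the paper's technique is not tied to the explicit form $dR/dP\propto e^{-c/\eps}$ and hence proves the more general Theorem~\ref{th:stability} for Schr\"odinger bridges with arbitrary reference~$R$; your ``inverse integrand'' trick has no obvious analogue there. One small point: your appeal to Theorem~\ref{th:wellposednessEOT} is only to its uniqueness assertion, which the paper establishes independently (Lemma~\ref{le:uniquenessAndMinimizer}\,(iii)), so there is no circularity.
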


If the involved optimization problems are finite, the theorem states the stability of the (entropic) optimal transport couplings. A simple yet important application is when the marginals $\mu,\nu$ are approximated by discrete measures, as it would be in a computational implementation. Even for this particular case, we are not aware of similar results in the literature.
We mention that continuity for the limiting cost~$c$ in Theorem~\ref{th:stabilityEOT} is a sharp condition: Proposition~\ref{pr:discont} will demonstrate that any nontrivial discontinuity in~$c$ leads to a failure of Theorem~\ref{th:stabilityEOT}, for suitable marginals.

A noteworthy application of the stability theorem was presented in the follow-up work~\cite{Nutz.20} where it was observed that convergence of Sinkhorn's algorithm can be seen as a stability problem. In that context, the marginals $\mu_{n},\nu_{n}$ are produced by the algorithm and known to converge to $\mu,\nu$ in great generality, while the iterates of the algorithm correspond to $\pi_{n}$. Theorem~\ref{th:stabilityEOT} then implies the weak convergence to the correct optimizer~$\pi$. Its geometric approach completely avoids the difficulty of establishing the integrability properties of $\mu_{n},\nu_{n}$ or even the finiteness of the associated entropic optimal transport problems.

The general existence result of Theorem~\ref{th:wellposednessEOT} is a consequence of Theorem~\ref{th:stabilityEOT} applied with $c_{n}=c$, $\eps_{n}=\eps$ and approximations $(\mu_{n},\nu_{n})\to(\mu,\nu)$ where $\mu_{n},\nu_{n}$ are discrete measures with finite support.
To solve the problem with marginals $(\mu_{n},\nu_{n})$, one could use Proposition~\ref{pr:cyclInvEOT}, but this particular case is a finite-dimensional minimization problem that can also be solved by standard calculus arguments. In particular, Theorem~\ref{th:stabilityEOT} yields an approach to construct cyclically invariant couplings which is novel even when the optimization problem is finite. This approach does not use the (classical but non-trivial) arguments of convex analysis and density factorization behind Proposition~\ref{pr:cyclInvEOT} (see \cite{BorweinLewis.92, BorweinLewisNussbaum.94, Csiszar.75, FollmerGantert.97, RuschendorfThomsen.93, RuschendorfThomsen.97}, among others). It is also quite different from the iterative method of~\cite{Fortet.40} which uses another finiteness condition; see~\cite{Leonard.19} for a modern presentation, analysis and extension of that method. Instead, our approach is close in spirit to the construction of $c$-cyclically monotone couplings that is standard in classical optimal transport; cf.\ \cite[pp.\,64--65]{Villani.09}.

The analogy with classical optimal transport extends in several directions. McCann showed in~\cite{McCann.95} that cyclical monotonicity singles out a particular transport map for quadratic cost~$c$ on $\R^{d}$ even if the optimal transport problem (here the 2-Wasserstein distance) is infinite, thus extending Brenier's map to this setting; see also \cite[pp.\,249--258]{Villani.09} for more general results. Here, the analogy becomes precise in the limit $\eps\to0$:  the extended Brenier coupling is the weak limit of the couplings $\pi=\pi_{\eps}$ established in Theorem~\ref{th:wellposednessEOT} (this follows from~\cite{BerntonGhosalNutz.21}).  
Another important parallel occurs at the technical level. Working with cyclical invariance, we proceed in a local fashion and focus on finitely many points $(x_{i},y_{i})$ at a time, rather than working with global objects like the Schr\"odinger potentials and their function spaces. For instance, non-compact marginal supports do not cause any particular difficulty in this approach. We emphasize that the novelty of the present study lies in how cyclical invariance is exploited and proved for the limit; the invariance property itself is merely an equivalent way of stating a factorization property of the density that is well known (see~\cite{BorweinLewis.92} or Proposition~\ref{pr:weakIsStrong} below). It turns out that, once the line of argument is found, remarkably general results can be obtained with fairly concise proofs.
We may hope that the techniques developed here can yield further insights into asymptotic questions on entropic optimal transport, and thus view the technique itself as a central contribution. Such questions may include quantifying the speed of convergence in our stability theorem or finding analogous results for dynamic Schr\"odinger bridge problems.

The companion paper \cite{BerntonGhosalNutz.21} illustrates the use of cyclical invariance for the limit $\eps\to0$. In this degenerate asymptotic, the limiting object is classical optimal transport as characterized by $c$-cyclical monotonicity. The latter property describes the shape of the support of the coupling and, therefore, is readily amenable to weak convergence arguments via Portmanteau's theorem. The same fact is often exploited in classical optimal transport theory, for instance in the standard proof of its stability theorem \cite[p.\,77]{Villani.09}. In the present study, the limit is entropic optimal transport. Being a property of the density, the relation of cyclical invariance with weak convergence is less direct (especially as the measures in Theorem~\ref{th:stabilityEOT} may well be mutually singular). Our general principle is to blow the points $(x_{i},y_{i})$ in Definition~\ref{de:cyclInvEOT} up to small balls, pass to the weak limit, and then recover information about the limiting density by shrinking the balls, via differentiation of measures. This technique appears to be novel in this area.

Starting with~\cite{Mikami.02, Mikami.04}, a number of works examine the degenerate asymptotic $\eps\to0$ where the limiting problem is classical optimal transport. The fact that weak limits of entropic optimizers are optimal transports was established by~\cite{Leonard.12} using Gamma-convergence arguments in a more general context of Schr\"odinger bridges; see also~\cite{CarlierDuvalPeyreSchmitzer.17} for the case of optimal transport with quadratic cost. 
As mentioned above, \cite{BerntonGhosalNutz.21} extends this result to transport problems with infinite value by way of cyclical invariance and $c$-cyclical monotonicity; moreover, a large deviations principle quantifies the local rate of convergence. 
Related results can be found in~\cite{ConfortiTamanini.19,Pal.19} where the expansion of the optimal cost as a function of~$\eps$ is studied. 
We remark that Gamma convergence seems difficult to use in the context of Theorem~\ref{th:stabilityEOT} due to the reference measures changing along the sequence.
The limit $\eps\to0$ can also be analyzed in the associated dual problem, here the solutions are called potentials. Convergence of potentials was shown in~\cite{GigliTamanini.21} for quadratic costs and compactly supported marginals, and recently in~\cite{NutzWiesel.21} for a general Polish setting.
Closer to the problem occurring in computational practice as well as the present question of stability, \cite{Berman.20} studies the convergence of the discrete Sinkhorn algorithm to an optimal transport in the joint limit when $\eps_{n}\to0$ and the marginals $\mu,\nu$ are approximated by discretizations $\mu_{n},\nu_{n}$ satisfying a certain density property. Explicit error bounds are derived, for instance for quadratic cost on the torus, to establish near-linear complexity of the resulting algorithm. For more on the computational challenges and remedies in this regime, see for instance \cite{Schmitzer.19} and the references therein.

While we are not aware of general stability results for the nondegenerate limit $\eps_{n}\to\eps>0$ in the literature, the sampling complexity of entropic optimal transport (with fixed $\eps$) can be seen as a particular form of stability with respect to the marginals. Indeed, \cite{GenevayPeyreCuturi.18,MenaWeed.19} study how the the empirical entropic Wasserstein distance, obtained by optimally coupling i.i.d.\ samples from the marginals, converges to the population version. The results are based on global arguments exploiting the regularity of the Schr\"odinger potentials, which, in turn, is achieved by imposing compactness and decay conditions on the marginals. See also~\cite{HarchaouiLiuPal.20} which studies a related asymptotic regime for a different regularization of optimal transport.
Related to the present work at least in spirit, there are several areas where analogues of $c$-cyclical monotonicity have recently lead to breakthroughs, including martingale optimal transport~\cite{BeiglbockJuillet.12}, optimal Skorokhod embeddings~\cite{BeiglbockCoxHuesmann.14, BeiglbockNutzStebegg.21} and weak transport~\cite{BackhoffBeiglbockPammer.19}.

The remainder of this paper is organized as follows. Section~\ref{se:mainResults} details the setting and main results in the language of Schr\"odinger bridges. The first step towards the stability theorem is reported Section~\ref{se:absCont} where we establish that weak limits of cyclically invariant couplings remain absolutely continuous. This is based on comparing measures of rectangles, an analysis that may be of independent interest. Section~\ref{se:InvarOfLimits} continuous the main proof by showing that limits of cyclically invariant couplings are again cyclically invariant. It comprises of two steps; the aforementioned principle of blowing up points and passing to the limit first yields a weakened version of the invariance property, and then measure-theoretic arguments can be used to show that the (proper) invariance property already follows. The concluding Section~\ref{se:proofOfMain} collects the arguments to prove the main results and their ramifications, including that continuity of the cost function is necessary for  stability.

\section{Main Results}\label{se:mainResults}

Let $(\X,d)$ be a complete, separable metric space; we write $\cP(\X)$ for the space of probability measures on the Borel $\sigma$-field $\cB(\X)$ endowed with weak convergence (induced by bounded continuous functions). The same is assumed for the second marginal space $(\Y,d)$, and we equip $\X\times\Y$ with the metric $d((x,y),(x',y'))=\max\{d(x,x'),d(y,y')\}$. Throughout this section, two measures $(\mu,\nu)\in\cP(\X)\times\cP(\Y)$ play the role of given marginals for the static Schr\"odinger bridge problem
\begin{equation}\label{eq:SB}
   \inf_{\pi\in\Pi(\mu,\nu)} H(\pi|R)
\end{equation}
where $R\in\cP(\X\times\Y)$ is a given \textbf{r}eference measure. We refer to \cite{Follmer.88, Leonard.14} for extensive surveys on Schr\"odinger bridges.
The entropic optimal transport problem~\eqref{eq:EOT} can be recovered (up to constants) as a special case for~$R$  defined by
\begin{equation}\label{eq:defR}
  \frac{dR}{dP} = a e^{-c/\eps}, \quad P:=\mu\otimes\nu 
\end{equation}
where $a = (\int e^{-c/\eps}\,dP)^{-1}$ is the normalizing constant. In particular, $R\sim P$, which will also be an important condition in many of our results for~\eqref{eq:SB}.
By way of~\eqref{eq:defR}, the following generalizes Definition~\ref{de:cyclInvEOT}.

\begin{definition}\label{de:cyclInv}
  Let $\pi\in\Pi(\mu,\nu)$ and $R\in\cP(\X\times\Y)$. We call $(\pi,R)$ \emph{cyclically invariant} if $\pi\sim R\sim P$ and there exists a version $d\pi/dR :\X\times\Y\to(0,\infty)$ of the relative density satisfying 
\begin{equation*}%
  \prod_{i=1}^{k} \frac{d\pi}{dR}(x_{i},y_{i})=\prod_{i=1}^{k} \frac{d\pi}{dR}(x_{i},y_{i+1})
\end{equation*}  
 for all $k\in\N$ and $(x_{i},y_{i})_{i=1}^{k} \subset \X\times\Y$, where $y_{k+1}:=y_{1}$.
\end{definition}

The analogue of the finiteness condition~\eqref{eq:finitenessCondEOT} is that
\begin{equation*}\label{eq:finitenessCond}
\mbox{there exists $\pi_{0}\in\Pi(\mu,\nu)$ with $H(\pi_{0}|R)<\infty$.}
\end{equation*}
We summarize the pertinent facts; see Lemma~\ref{le:uniquenessAndMinimizer} for detailed references.

\begin{proposition}\label{pr:cyclInv}
  Let $R\sim P$ and let~\eqref{eq:SB} be finite. There exists a unique minimizer $\pi\in\Pi(\mu,\nu)$ for~\eqref{eq:SB}, it satisfies~$\pi\sim R$, and it is the unique coupling $\pi$ such that $(\pi,R)$ is cyclically invariant.
\end{proposition}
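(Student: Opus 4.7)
The plan is to assemble the three claims of Proposition~\ref{pr:cyclInv} from the classical theory of I-projections, using the product-form of the optimal density as the bridge to cyclical invariance.

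First I would establish existence and uniqueness of the minimizer. Relative entropy $H(\cdot|R)$ is convex and weakly lower semicontinuous, while $\Pi(\mu,\nu)$ is weakly compact by tightness of its marginals. Combined with the finiteness hypothesis, this gives an optimizer, and strict convexity of $H(\cdot|R)$ on the convex set of couplings with finite entropy forces uniqueness. Next, the I-projection theorem of Csisz\'ar (see the references cited in the paper: \cite{Csiszar.75, BorweinLewis.92, RuschendorfThomsen.93}) yields that the minimizer $\pi$ has a density of the form $d\pi/dR(x,y) = f(x)g(y)$ for nonnegative measurable $f,g$. The density is strictly positive $R$-almost everywhere, by the standard argument that if $\{fg=0\}$ had positive $R$-mass one could strictly decrease the entropy by shifting mass; combined with $R\sim P$, this gives $\pi\sim R$.

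Given the product form $d\pi/dR=fg$, cyclical invariance is a one-line computation since $\prod_i f(x_i)g(y_i)=\prod_i f(x_i)g(y_{i+1})$ trivially for any finite family and any cyclic permutation of the second coordinates. Thus the unique minimizer is a cyclically invariant coupling, which takes care of the existence part of the last statement.

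For the converse, suppose $(\pi,R)$ is cyclically invariant with $\pi\sim R$; I want to deduce that $\pi$ is the minimizer. The key structural step, which I would invoke from Proposition~\ref{pr:weakIsStrong} in the paper (essentially due to \cite{BorweinLewis.92}), is that cyclical invariance forces the relative density to factor as $d\pi/dR(x,y)=f(x)g(y)$ with $f,g>0$; the main obstacle is exactly this lifting from the pointwise identity to a global measurable factorization. The idea is to use the case $k=2$, which asserts $\rho(x_1,y_1)\rho(x_2,y_2)=\rho(x_1,y_2)\rho(x_2,y_1)$ for all $(x_i,y_i)$; freezing a reference point $(x_\ast,y_\ast)$ where $\rho$ is finite and positive, one defines $f(x):=\rho(x,y_\ast)$ and $g(y):=\rho(x_\ast,y)/\rho(x_\ast,y_\ast)$, and obtains $\rho(x,y)=f(x)g(y)$. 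Once the factorization is in hand, optimality of $\pi$ follows from the identity $H(\pi'|R)=H(\pi'|\pi)+\int\log(fg)\,d\pi'$, where the last integral equals $\int\log f\,d\mu+\int\log g\,d\nu$ and hence is the same for every $\pi'\in\Pi(\mu,\nu)$ with finite entropy; therefore $H(\pi'|R)\geq H(\pi|R)$ with equality only for $\pi'=\pi$. By the already established uniqueness of the minimizer, this proves that there is only one cyclically invariant coupling.
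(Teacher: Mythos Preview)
Your strategy matches the paper's: both reduce cyclical invariance to the product factorization $d\pi/dR(x,y)=f(x)g(y)$ and then appeal to the classical I-projection theory (Csisz\'ar, Borwein--Lewis, R\"uschendorf--Thomsen). The paper itself does not give a self-contained argument for Proposition~\ref{pr:cyclInv}; it records the statement as Lemma~\ref{le:uniquenessAndMinimizer} and defers to~\cite{Nutz.20} and the original references for parts~(i) and~(ii), so your write-up is in fact more detailed than the paper's.

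There is, however, a genuine gap in your verification step ``factored density implies optimality.'' The identity
\[
  H(\pi'|R)=H(\pi'|\pi)+\int\log(fg)\,d\pi'
\]
and the subsequent splitting $\int\log(fg)\,d\pi'=\int\log f\,d\mu+\int\log g\,d\nu$ require that $\log f\in L^{1}(\mu)$ and $\log g\in L^{1}(\nu)$ individually, or at least that all terms are well-defined extended reals that can be rearranged; none of this is available a priori. Relatedly, you implicitly use $H(\pi|R)<\infty$ for the cyclically invariant~$\pi$, which is exactly what you are trying to prove (via optimality). This integrability issue is the well-known delicate point of the subject; the references cited above resolve it by more careful arguments (truncation, Fenchel--Young, or duality), not by the direct decomposition you wrote down.

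For the uniqueness of cyclically invariant couplings, the paper gives a slightly different and cleaner reduction than yours: given two such couplings $\pi,\pi'$, the ratio $d\pi/d\pi'$ again factors, so $(\pi,\pi')$ is cyclically invariant with~$\pi'$ as reference. Since $H(\pi'|\pi')=0$, the Schr\"odinger problem with reference~$\pi'$ is finite, and part~(ii) forces $\pi$ to be its minimizer---but that minimizer is~$\pi'$, so $\pi=\pi'$. This change-of-reference trick avoids comparing $H(\pi|R)$ directly and is worth noting, though it still ultimately rests on a correct proof of~(ii).
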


In the remainder of the paper we assume that the underlying spaces allow for differentiation of measures in the following sense.

\begin{assumption}\label{as:LebesgueDiff}
Given $\rho,\lambda\in\cP(\X)$ satisfying $\rho\ll \lambda$, there exists $\X'\subset \X$ of full $\lambda$-measure such that 
\begin{align}
f(x):=\lim_{r\to 0} \frac{\rho(B_r(x))}{\lambda(B_r(x))}, \quad x\in \X'
\end{align}
defines a version of the Radon--Nikodym density $d\rho/d\lambda$. The analogous property is assumed on the space~$\X\times\Y$.
\end{assumption}

For Euclidean spaces $\X,\Y$, the assumption holds by the standard differentiation theorem~\cite[Theorem~1.32, p.\,53]{EvansGariepy.15}. More generally, it holds in the context of so-called Vitali covering relations; the classical reference is~\cite[Theorem~2.9.8, p.\,156]{Federer.69}. For example, Assumption~\ref{as:LebesgueDiff} holds when~$\X$ and~$\X\times\Y$ are compact subsets of Riemannian manifolds (due to the ``directionally limited'' property established in~\cite[Section~2.8.9, pp.\,145-146]{Federer.69}), or more generally, countable unions of such sets. See also~\cite[ pp.\,4-8, esp.\ Example~1.15]{Heinonen.01} for an accessible introduction. For our purposes, the main restriction is that differentiation of measures generally fails on infinite-dimensional spaces; 
see~\cite{Preiss.81} for a counterexample and~\cite{Preiss.79} for a related result on coverings. %
An alternative to Assumption~\ref{as:LebesgueDiff}, making our results slightly more general, is to impose a doubling condition on the specific marginal measures~$(\mu,\nu)$; cf.\ Remark~\ref{rk:doubling}.

We have seen in the Introduction that continuity of~$c$ is essential for the stability of entropic optimal transport. In view of~\eqref{eq:defR}, it is then clear that the regularity of~$dR/dP$ is pivotal. The following generalizations of Theorems~\ref{th:wellposednessEOT} and~\ref{th:stabilityEOT} are our main results.

\begin{theorem}[Wellposedness]\label{th:wellposedness}
  Suppose that $R\sim P:=\mu\otimes\nu$ and that the density $dR/dP$ admits a continuous version. Then there exists a unique coupling $\pi\in\Pi(\mu,\nu)$ such that $(\pi,R)$ is cyclically invariant. If~\eqref{eq:SB} is finite, $\pi$ is its minimizer.
\end{theorem}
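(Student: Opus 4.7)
\emph{Plan.} I would follow the strategy sketched in the Introduction after Theorem~\ref{th:wellposednessEOT}: obtain existence by approximating the marginals with discrete measures (where~\eqref{eq:SB} is automatically finite) and passing to the limit via the stability theorem, while deriving uniqueness from Proposition~\ref{pr:cyclInv} after a change of reference measure.

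\emph{Uniqueness.} Suppose $\pi,\pi'\in\Pi(\mu,\nu)$ are both cyclically invariant with respect to~$R$. The $k=2$ instance of Definition~\ref{de:cyclInv}, applied to the strictly positive pointwise versions of the densities, forces the factorizations $d\pi/dR(x,y)=f(x)g(y)$ and $d\pi'/dR(x,y)=f'(x)g'(y)$ on all of $\X\times\Y$. Use $\tilde R := \pi$ as a new reference measure; since $\pi\sim R\sim P$, also $\tilde R\sim P$. Now $(\pi,\tilde R)$ is trivially cyclically invariant (density identically $1$), while $d\pi'/d\tilde R = (f'/f)(g'/g)$ still factorizes, so $(\pi',\tilde R)$ is cyclically invariant as well. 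The Schrödinger bridge problem $\inf_{\pi''\in\Pi(\mu,\nu)} H(\pi''|\tilde R)$ is finite (attained at $\pi''=\tilde R$ with value $0$), so Proposition~\ref{pr:cyclInv} applied with reference~$\tilde R$ yields $\pi=\pi'$.

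\emph{Existence and main obstacle.} Choose finitely supported $\mu_n\to\mu$ in $\cP(\X)$ and $\nu_n\to\nu$ in $\cP(\Y)$; set $P_n:=\mu_n\otimes\nu_n$ and define $R_n$ by $dR_n/dP_n:=\tilde r/Z_n$, where $\tilde r$ is the continuous, strictly positive version of $dR/dP$ and $Z_n:=\int\tilde r\,dP_n\in(0,\infty)$ (finite because $\supp P_n$ is a finite set on which $\tilde r$ is bounded). Since $\supp P_n$ is finite, every $\pi_n'\in\Pi(\mu_n,\nu_n)$ satisfies $H(\pi_n'|R_n)<\infty$, so Proposition~\ref{pr:cyclInv} produces a cyclically invariant $\pi_n\in\Pi(\mu_n,\nu_n)$. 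Tightness of $\{\pi_n\}$ follows from weak convergence of its marginals, so a subsequence converges weakly to some $\pi\in\Pi(\mu,\nu)$; the Schrödinger-bridge analogue of Theorem~\ref{th:stabilityEOT}, proved in Sections~\ref{se:absCont}--\ref{se:InvarOfLimits} via the blow-up/shrinking-balls technique summarized in the Introduction, ensures that $(\pi,R)$ is cyclically invariant. The minimizer claim when~\eqref{eq:SB} is finite is then immediate from Proposition~\ref{pr:cyclInv}. The delicate point in carrying out this argument is matching the hypotheses of the stability theorem: the densities $dR_n/dP_n$ must approximate $dR/dP$ in the correct sense along the discretization. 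Continuity of $\tilde r$ is precisely what underwrites this, but extra care may be needed when $\tilde r$ is unbounded or decays to $0$ at infinity; a preliminary truncation of the marginals to a compact exhaustion of $\X\times\Y$ together with a diagonal extraction handles this and does not interfere with the fundamentally local nature of cyclical invariance.
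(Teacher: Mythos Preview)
Your uniqueness argument is exactly the one the paper uses (packaged there as Lemma~\ref{le:uniquenessAndMinimizer}\,(iii)): switch to $\tilde R=\pi$ as reference, observe that both couplings remain cyclically invariant, and invoke the finite-value case.

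For existence, your approach is correct but differs from the paper's actual proof. You approximate the \emph{marginals} by finitely supported measures and keep the density $\tilde r$ (up to normalization); this is the route advertised in the Introduction. The paper instead keeps $(\mu_n,\nu_n)\equiv(\mu,\nu)$ fixed and approximates the \emph{reference measure} by truncating the density from below,
\[
  \frac{dR_n}{dP}=a_n\Big(\frac{dR}{dP}\vee \tfrac{1}{n}\Big).
\]
The gain is twofold. First, finiteness of the approximating problems is immediate from the lower bound: $H(P|R_n)\le \log n - \log a_n<\infty$, so $\pi_0=P$ already witnesses a finite value, without any appeal to discreteness. Second, the hypothesis $R_n\to R$ of Theorem~\ref{th:stability} is trivially verified by dominated convergence, and condition~\eqref{eq:mainCond} follows because the continuous density is locally bounded away from zero, making the truncation eventually inactive near any fixed $z\in\spt R$. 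This cleanly avoids the ``extra care'' you flag: in your setup, $R_n\to R$ amounts to $\int\phi\tilde r\,dP_n\to\int\phi\tilde r\,dP$ for unbounded $\tilde r$, which does \emph{not} follow from $P_n\to P$ alone and forces the compact-exhaustion/diagonal argument you sketch. That argument can be made to work, but the paper's truncation of the reference density is a more direct device.
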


\begin{theorem}[Stability]\label{th:stability}
  For $n\geq 1$, consider $(\mu_{n},\nu_{n})\in\cP(\X)\times\cP(\Y)$, $R_{n}\in\cP(\X\times\Y)$ and $\pi_{n}\in\Pi(\mu_{n},\nu_{n})$. Let $(\pi_{n},R_{n})$ be cyclically invariant and suppose that $\mu_{n},\nu_{n},R_{n}$ converge weakly to some limits $\mu,\nu,R$, where $R\sim P:=\mu\otimes\nu$. Writing $P_{n}:=\mu_{n}\otimes\nu_{n}$, suppose also that for some versions $\frac{dR_{n}}{dP_{n}},\frac{dR}{dP}: \X\times\Y\to (0,\infty)$ of the respective densities and some constants $\alpha_{n}>0$, it holds that for any fixed $z\in\spt R$,
\begin{equation}\label{eq:mainCond}
     \frac{dR_{n}}{dP_{n}}(z')=[1+o(1)]\alpha_{n} \frac{dR}{dP}(z),
  \end{equation}
  where $o(1)$ stands for a function $\phi_{z}(z',n)\to0$ as $d(z',z)+1/n\to0$.
Then~$\pi_{n}$ converges weakly to a limit $\pi\sim R$ and $(\pi,R)$ is cyclically invariant.
\end{theorem}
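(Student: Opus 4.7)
The plan is to follow the three-step program foreshadowed in the Introduction: extract a subsequential weak limit $\pi$, show $\pi\sim R$, and transfer cyclical invariance from $(\pi_n,R_n)$ to $(\pi,R)$. Tightness of $\{\pi_n\}$ follows from that of its marginals (weakly convergent on Polish spaces) by Prokhorov applied to products of compact sets; a subsequence $\pi_{n_k}$ thus converges weakly to some $\pi$, and continuity of the marginal projections yields $\pi\in\Pi(\mu,\nu)$. Uniqueness of cyclically invariant couplings---a known fact independent of continuity hypotheses, as noted after Theorem~\ref{th:wellposednessEOT}---will then promote subsequential convergence to full convergence once the invariance of $(\pi,R)$ has been established.

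For $\pi\sim R$, the approach of Section~\ref{se:absCont} is to compare $\pi_n$- and $R_n$-masses of small rectangles. Cyclical invariance of $(\pi_n,R_n)$ forces $d\pi_n/dR_n$ to factorize as a product of an $x$-function and a $y$-function, and condition~\eqref{eq:mainCond} converts $R_n$-masses of small rectangles to $R$-masses up to the scaling $\alpha_n$. Combining these delivers estimates on $\pi_n$-masses of rectangles that are uniform in $n$, which transfer to $\pi$ via Portmanteau on open rectangles and yield $\pi\ll R$; the reverse direction $R\ll\pi$ follows symmetrically.

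The core of the proof is showing $(\pi,R)$ is cyclically invariant. Fix $k\in\N$ and a tuple $(x_i,y_i)_{i=1}^k\subset\spt R$. For small $r>0$, integrate the pointwise identity
\[
\prod_{i=1}^k \frac{d\pi_n}{dR_n}(u_i,v_i) \;=\; \prod_{i=1}^k \frac{d\pi_n}{dR_n}(u_i,v_{i+1})
\]
against $\prod_i dR_n(u_i,v_i)$ over $\prod_i B_r(x_i)\times B_r(y_i)$. The left side evaluates to $\prod_i \pi_n(B_r(x_i)\times B_r(y_i))$; the right side, after using Fubini and expanding $dR_n$ via $dR_n/dP_n$, can be rewritten in terms of $\pi_n$-masses of the rotated rectangles $B_r(x_i)\times B_r(y_{i+1})$ multiplied by local ratios of $dR_n/dP_n$ on the balls. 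Passing $n\to\infty$ by Portmanteau, and using \eqref{eq:mainCond} to transfer $R_n$-density ratios to $R$-density ratios (the $\alpha_n$ factors cancel in the ratios), then letting $r\to 0$ and invoking Lebesgue differentiation (Assumption~\ref{as:LebesgueDiff}) on $\X$, $\Y$, and $\X\times\Y$, one recovers the cyclical invariance identity for $d\pi/dR$ at $P^{\otimes k}$-a.e.\ tuple $(x_i,y_i)_{i=1}^k$. This a.e.\ version is then promoted, as in Section~\ref{se:InvarOfLimits}, to the pointwise statement of Definition~\ref{de:cyclInv} by a measure-theoretic selection argument that chooses a single version of the density compatible across all $k$ simultaneously.

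The main obstacle is the blow-up manipulation in the invariance step: cyclical invariance is a pointwise property of densities, whereas weak convergence is inherently integral, so reconciling the two demands carefully rewriting the ``rotated'' product---whose variables couple non-trivially through the cyclic shift in the $y$-indices---into rectangle masses that behave well under weak limits. Tracking the scaling $\alpha_n$ through both the limit $n\to\infty$ and the shrinking limit $r\to 0$, together with the measure-theoretic strengthening from a.e.\ to pointwise invariance, is where the argument requires the most care.
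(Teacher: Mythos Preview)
Your overall architecture matches the paper's: tightness, absolute continuity via the rectangle comparison of Section~\ref{se:absCont}, the blow-up/shrink argument of Section~\ref{se:InvarOfLimits} to transfer invariance, and uniqueness to upgrade subsequential to full convergence. The description of the invariance step is essentially Proposition~\ref{pr:limitEquiv}.

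There is one genuine gap. You write that after obtaining $\pi\ll R$ from the rectangle estimates, ``the reverse direction $R\ll\pi$ follows symmetrically.'' It does not. The rectangle inequality of Lemma~\ref{le:monotoneIntegrated} requires uniform local bounds on $dR_n/dP_n$ and its reciprocal, which condition~\eqref{eq:mainCond} supplies. A symmetric argument with the roles of $\pi_n$ and $R_n$ exchanged would need uniform local bounds on $d\pi_n/dP_n$, and nothing in the hypotheses provides these; indeed the whole point is that $\pi_n$ is unknown. Likewise, the contraposition device of Lemma~\ref{le:forAcContradict} exploits that a coupling can disintegrate into mutually singular kernels, which has no analogue when the roles are swapped (the disintegration of $P$ is just $\nu$, independent of $x$). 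So Section~\ref{se:absCont} yields only $\pi\ll R$.

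In the paper, the equivalence $\pi\sim R$ is obtained \emph{after} the invariance step, not before: Proposition~\ref{pr:limitEquiv} needs only $\pi\ll R$ to establish weak cyclical invariance, and then Proposition~\ref{pr:weakIsStrong} produces the factorization $d\pi/dR(x,y)=\varphi(x)\psi(y)$ with $\varphi,\psi>0$ everywhere. Strict positivity of this version is what forces $\pi\sim R$. So your outline becomes correct once you drop the symmetry claim and reorder: first $\pi\ll R$ from Section~\ref{se:absCont}; then weak cyclical invariance via the blow-up argument; then the factorization, which simultaneously delivers the pointwise invariance of Definition~\ref{de:cyclInv} and the missing $R\ll\pi$.
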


Schr\"odinger bridges are closely related to so-called Schr\"odinger systems (also called Schr\"odinger equations). For instance, Theorem~\ref{th:wellposedness} entails the following wellposedness result.

\begin{corollary}[Schr\"odinger System]\label{co:Schr\"odingerEqn}
  Let $(\mu,\nu)\in\cP(\X)\times\cP(\Y)$ and let $f:\X\times\Y\to(0,\infty)$ be continuous with $\int_{\X\times\Y} f\,dP=1$. There exist Borel functions $\varphi:\X\to(0,\infty)$ and $\psi:\Y\to(0,\infty)$ such that 
  \begin{equation}\label{eq:Schr\"odingerEqn}
    \int_{\Y} f(x,y)\psi(y)\,\nu(dy)=\varphi(x)^{-1}, \quad \int_{\X} f(x,y)\varphi(x)\,\mu(dx)=\psi(y)^{-1}
  \end{equation} 
  for $\mu$-a.e.\ $x\in \X$ and $\nu$-a.e.\ $y\in \Y$. The pair $(\varphi,\psi)$ is a.s.\ unique up to a multiplicative constant.\footnote{I.e., any solution $(\varphi',\psi')$ satisfies $\varphi'=a\varphi$ $\mu$-a.s., $\psi'=a^{-1}\psi$ $\nu$-a.s., for some $a>0$.}
\end{corollary}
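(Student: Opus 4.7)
The plan is to derive the corollary from Theorem~\ref{th:wellposedness} by specializing the reference measure. Set $R \in \cP(\X\times\Y)$ via $dR/dP := f$; since $f$ is continuous, strictly positive, and integrates to one against $P$, we have $R \sim P$ and $dR/dP$ admits a continuous version. Theorem~\ref{th:wellposedness} then furnishes a unique $\pi \in \Pi(\mu,\nu)$ such that $(\pi, R)$ is cyclically invariant.

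The next step is to invoke the equivalence (Proposition~\ref{pr:weakIsStrong} in the paper) between cyclical invariance of $(\pi, R)$ and the existence of a multiplicative factorization $d\pi/dR(x,y) = \varphi(x)\psi(y)$ with Borel $\varphi:\X\to(0,\infty)$ and $\psi:\Y\to(0,\infty)$. Rewriting in terms of $P$ yields $d\pi/dP(x,y) = f(x,y)\varphi(x)\psi(y)$. Disintegrating via Fubini and imposing the marginal constraints $\pi(\,\cdot\,\times\Y) = \mu$ and $\pi(\X\times\,\cdot\,) = \nu$ produces, for $\mu$-a.e.\ $x$ and $\nu$-a.e.\ $y$, the two identities in~\eqref{eq:Schr\"odingerEqn}, establishing existence.

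For uniqueness, suppose $(\varphi',\psi')$ is another pair solving~\eqref{eq:Schr\"odingerEqn}, and define a candidate coupling $\pi'$ by $d\pi'/dP := f\varphi'\psi'$. The two equations in~\eqref{eq:Schr\"odingerEqn} imply, by Fubini, that $\pi'$ has first marginal $\mu$ and second marginal $\nu$; in particular $\pi'$ is a probability measure on $\X\times\Y$. Because $d\pi'/dR = \varphi'\psi'$ has product form, $(\pi',R)$ is manifestly cyclically invariant: both products in Definition~\ref{de:cyclInv} expand to $\prod_i \varphi'(x_i) \prod_i \psi'(y_i)$ after cycling the $y$-indices. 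The uniqueness clause of Theorem~\ref{th:wellposedness} then forces $\pi' = \pi$, hence $\varphi'(x)\psi'(y) = \varphi(x)\psi(y)$ for $P$-a.e.\ $(x,y)$. Selecting any $y_0$ at which both $\psi(y_0),\psi'(y_0) \in (0,\infty)$ (such a $y_0$ exists in a set of positive $\nu$-measure by Fubini) identifies $\varphi'/\varphi$ with a $\mu$-a.s.\ constant $a > 0$, and then $\psi' = a^{-1}\psi$ $\nu$-a.s.

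The only genuine obstacle is invoking the factorization form of cyclical invariance, which converts the pointwise cyclic identity of Definition~\ref{de:cyclInv} into a bona fide product structure on the density; this is precisely the content of the classical Proposition~\ref{pr:weakIsStrong} cited in the text. Once that equivalence is in hand, the remainder of the argument is bookkeeping via Fubini and the uniqueness statement of Theorem~\ref{th:wellposedness}. Note that the finiteness hypothesis~\eqref{eq:finitenessCondEOT} is not needed here, which is essential: the whole point is that Theorem~\ref{th:wellposedness} produces $\pi$ without requiring finite relative entropy.
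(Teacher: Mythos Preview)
Your proposal is correct and follows essentially the same route as the paper: define $R$ via $dR/dP=f$, apply Theorem~\ref{th:wellposedness} to obtain the unique cyclically invariant $\pi$, extract the factorization $d\pi/dR=\varphi\psi$ via Proposition~\ref{pr:weakIsStrong}, and read off the Schr\"odinger system from the marginal constraints. Your uniqueness argument (rebuilding a coupling $\pi'$ from a competing pair $(\varphi',\psi')$ and invoking the uniqueness clause of Theorem~\ref{th:wellposedness}) is exactly the mechanism the paper alludes to when it writes ``Uniqueness of $\varphi,\psi$ up to a constant follows from the uniqueness of~$\pi$''; you have simply spelled out the details.
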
 

As above, the uniqueness follows from known results. Existence for continuous functions $f$ such that $f, f^{-1}$ are uniformly bounded was first shown in~\cite{Beurling.60}.
Under the boundedness condition alone, existence is due to~\cite{HobbyPyke.65}. Using the connection with Schr\"odinger bridges, \cite{RuschendorfThomsen.93} relaxed the boundedness to a condition of finite entropy, corresponding to the finiteness of~\eqref{eq:SB} in our setting. We refer to~\cite{Leonard.14} for a more complete review of the literature which dates back to Schr\"odinger. In Corollary~\ref{co:Schr\"odingerEqn}, we reintroduce the continuity condition of~\cite{Beurling.60} but avoid any condition of finite entropy or boundedness. Of course, the stability result of Theorem~\ref{th:stability} also has an analogous corollary for Schr\"odinger systems.

\section{Absolute Continuity of Limits}\label{se:absCont}

In this section we show that if $(\pi_{n},R_{n})$ is cyclically invariant and $R_{n}\sim \mu_{n}\otimes\nu_{n}$ holds in a locally uniform sense (to be made precise), then any weak limit pair $(\pi,R)=(\lim_{n} \pi_{n},\lim_{n} R_{n})$ must satisfy $\pi\ll R$.

As the method of proof is novel, we first sketch the line of argument.
We shall be comparing the measures of rectangles $F_{i}\times G_{i}\subset \X\times\Y$ for $i=1,2$ with their permutations $F_{i}\times G_{i+1}$.\footnote{We use the cyclical convention for $i\in\{1,2\}$; that is, $i+1:=1$ for $i=2$.}
Consider first the trivial case $R_{n}=P_{n}:=\mu_{n}\otimes\nu_{n}$, then cyclical invariance of $(\pi_{n},R_{n})$ implies
\begin{equation}\label{eq:absContIdea}
  \pi_{n}(F_{1}\times G_{1})\pi_{n}(F_{2}\times G_{2}) = \pi_{n}(F_{1}\times G_{2})\pi_{n}(F_{2}\times G_{1}).
\end{equation}
This will of course no longer hold if $R_{n}\neq P_{n}$, but the equivalence $R_{n}\sim P_{n}$ suggests that the two sides of~\eqref{eq:absContIdea} should still be comparable. We will quantify the equivalence $R_{n}\sim P_{n}$ and assume it to hold uniformly in~$n$. Then, we prove that the two sides of~\eqref{eq:absContIdea} are comparable in the sense that their quotient remains bounded uniformly in~$n$. For suitable rectangles, the bound propagates to the weak limit $(\pi,R)$, accomplishing the first step of the proof.
The second step is to argue by contraposition that this bound implies $\pi\ll R$. Indeed, we establish that if $\pi\in\Pi(\mu,\nu)$ is singular wrt.\ $\mu\otimes\nu$, then there exist $F_{i},G_{i}$ such that $\pi(F_{1}\times G_{1})\pi(F_{2}\times G_{2})$ is above a threshold whereas $\pi(F_{1}\times G_{2})\pi(F_{2}\times G_{1})$ is arbitrarily small.

For ease of reference, we first record two measure-theoretic facts. %

\begin{lemma}\label{le:continuitySets}
  Let $\rho$ be a $\sigma$-finite measure on a Polish space~$(\Omega,d)$. We say that $C\subset\Omega$ is $\rho$-\emph{continuous} if its boundary $\partial C$ is a $\rho$-nullset.
  
  (i) $\rho$-continuous sets form a field; i.e., unions, intersections, complements, differences of $\rho$-continuous sets are again $\rho$-continuous.
  
  (ii) For fixed $z\in \Omega$, the open ball $B_{r}(z)=\{z': d(z,z')<r\}$ is $\rho$-continuous for all but countably many values of~$r>0$. In particular, given $r>0$, there exists $0<r'\leq r$ such that $B_{r'}(z)$ is $\rho$-continuous.
  
  (iii) If $F\subset\X$ is $\mu$-continuous and $G\subset\Y$ is $\nu$-continuous, then $F\times G$ is $\pi$-continuous for any $\pi\in\Pi(\mu,\nu)$.

  (iv) Given $A\in\cB(\Omega)$ and $\eps>0$, there exists an open set $B\subset A_{\eps}:=\{d(\cdot,A)<\eps\}$ with $\rho(\partial B)=0$ and $\rho(A\Delta B)<\eps$.
\end{lemma}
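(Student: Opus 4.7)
The lemma assembles four statements of increasing content, so my plan is to dispatch (i)--(iii) rapidly by elementary set theory and spend the bulk of the effort on (iv).

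For (i), the standard boundary identities $\partial(A^{c})=\partial A$ and $\partial(A\,\square\,B)\subset\partial A\cup\partial B$ with $\square\in\{\cup,\cap,\setminus\}$ reduce the closure of $\rho$-continuity under field operations to the preservation of $\rho$-nullity under finite unions. For (ii), I observe that $\partial B_{r}(z)\subset S_{r}(z):=\{z':d(z',z)=r\}$ and that the spheres $S_{r}(z)$ are pairwise disjoint across distinct $r$. Fixing a $\sigma$-finite decomposition $\Omega=\bigsqcup_{n}\Omega_{n}$ with $\rho(\Omega_{n})<\infty$, only countably many radii can satisfy $\rho(\Omega_{n}\cap S_{r}(z))>0$ for each $n$, so a countable union over $n$ leaves only a countable exceptional set globally. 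For (iii), I use $\partial(F\times G)\subset(\partial F\times\bar{G})\cup(\bar{F}\times\partial G)$ together with the marginal bounds $\pi(\partial F\times\bar{G})\le\mu(\partial F)=0$ and $\pi(\bar{F}\times\partial G)\le\nu(\partial G)=0$ that hold for every $\pi\in\Pi(\mu,\nu)$.

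For (iv) my plan is a two-stage sandwich. First, by outer regularity of $\rho$ on the Polish space $\Omega$, fix an open $U\supset A$ with $\rho(U\setminus A)<\eps/2$ and put $V:=U\cap A_{\eps}$; then $V$ is open, $A\subset V\subset A_{\eps}$, and $\rho(V\setminus A)<\eps/2$. Second, by inner regularity extract a compact $K\subset V$ with $\rho(V\setminus K)<\eps/2$. For each $x\in K$ there is some $r>0$ with $B_{r}(x)\subset V$, and by (ii) I may shrink $r$ to some $r_{x}>0$ with $\rho(\partial B_{r_{x}}(x))=0$ while keeping $B_{r_{x}}(x)\subset V$. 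Compactness of $K$ yields a finite subcover $B_{r_{x_{1}}}(x_{1}),\dots,B_{r_{x_{N}}}(x_{N})$; set $B:=\bigcup_{i=1}^{N}B_{r_{x_{i}}}(x_{i})$. Then $B$ is open, $B\subset V\subset A_{\eps}$, and $\rho(\partial B)=0$ by (i). Since $K\subset B\subset V$ one has $A\setminus B\subset V\setminus K$ and $B\setminus A\subset V\setminus A$, so $\rho(A\Delta B)\le\rho(V\setminus K)+\rho(V\setminus A)<\eps$, completing (iv).

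The main technical point is the regularity input used in (iv). For a finite Borel measure on a Polish space---the case of every intended application, where $\rho$ is a probability measure---Radonness makes both outer and inner regularity immediate. For a general $\sigma$-finite $\rho$ with $\rho(A)=\infty$, one would first localize each step to a finite-measure piece from the $\sigma$-finite decomposition and then invoke the Radon property on that piece; this is the one place that requires some care, but it introduces no new idea beyond the core sandwich construction above.
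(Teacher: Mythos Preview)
Your proof is correct and follows essentially the same route as the paper's. Parts (i)--(iii) match exactly; for (iv) the paper likewise sandwiches $A$ between a compact set and an open set contained in $A_{\eps}$, covers the compact set by finitely many $\rho$-continuous balls, and takes $B$ to be their union. The only cosmetic difference is that the paper chooses the compact $K\subset A$ rather than $K\subset V$, which changes nothing of substance; your remark that the regularity input is immediate for finite measures (the only case actually used) is also apt.
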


\begin{proof}
 Statement (i) is verified directly; (ii) holds because
 a $\sigma$-finite measure admits at most countably many disjoint sets of positive measure; (iii) follows from $\partial (F\times G)=(\overline F \times \partial G) \cup (\partial F \times \overline G)$. Let $A,\eps$ be as in~(iv). By interior and exterior regularity of $\rho$, there is a compact set $K\subset A$ with $\rho(A\setminus K)<\eps$ and an open set $A\subset O \subset A_{\eps}$ with $\rho(O\setminus A)<\eps$. 
  We have $r(z):=d(z,O^{c})>0$ for all $z\in K$ by the closedness of $O^{c}$, and~$K$ is covered by the open balls $B_{r(z)}(z)$ with $z\in K$. 
  After making $r(z)$ smaller if necessary, each of these balls is $\rho$-continuous. Choosing a finite cover $\{B_{r(z_{i})}(z_{i})\}_{i\leq N}$, the set $B:=\cup_{i} B_{r(z_{i})}(z_{i})$ has the required properties.
\end{proof} 

The second fact is a conditional version of the differentiation of measures, based on Assumption~\ref{as:LebesgueDiff} for the marginal space~$\X$. While not widely known, this concept was already established in~\cite{Pfanzagl.79}, although the author defined differentiation of measures in a slightly different way. For the convenience of the reader, we detail the adaptation to our setting.

\begin{lemma}\label{le:conditionalDiffOfMeas}
Let $\pi\in\cP(\X\times\Y)$ and let $\mu$ be its first marginal. Consider for $x\in\spt \mu$ and $r>0$ the probability measure $\pi^{(r)}_{x}\in\cP(\Y)$ defined by
$$
  \pi^{(r)}_{x}(C) = \frac{\pi(B_r(x)\times C)}{\mu(B_r(x))}, \quad C \in \mathcal{B}(\Y).
$$
Under Assumption~\ref{as:LebesgueDiff} on~$\X$, there exists $\X_{0}\subset\spt\mu$ with $\mu(\X_0)= 1$ such that for all $x\in \X_0$, the weak limit
$$
 \pi_x := \lim_{r\to0} \pi^{(r)}_x
$$
exists. Moreover, $\pi_{x}$ defines a regular conditional probability of~$\pi$ given $x$.
\end{lemma}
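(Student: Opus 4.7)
My plan is to construct $\pi_{x}$ by applying Assumption~\ref{as:LebesgueDiff} simultaneously to the countably many signed measures obtained by testing $\pi$ against a convergence-determining family on $\Y$, and to obtain weak convergence of $\pi^{(r)}_{x}$ via a separate tightness argument based on differentiating the tails $\pi(\,\cdot\,\times K^{c})$.

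First I would fix a countable family $(g_{k})_{k\geq 1}\subset C_{b}(\Y)$ that is convergence-determining for $\cP(\Y)$; such a family exists since $\Y$ is Polish. For each~$k$, the finite signed measure $\rho_{k}$ on~$\X$ defined by $\rho_{k}(A):=\int_{A\times\Y}g_{k}(y)\,\pi(dx,dy)$ satisfies $|\rho_{k}|\leq \|g_{k}\|_{\infty}\mu$, hence is absolutely continuous with respect to~$\mu$. Applying Assumption~\ref{as:LebesgueDiff} to the Jordan decomposition of~$\rho_{k}$ yields a set $\X_{k}\subset\spt\mu$ with $\mu(\X_{k})=1$ on which
\[
f_{k}(x):=\lim_{r\to 0}\frac{\rho_{k}(B_{r}(x))}{\mu(B_{r}(x))}=\lim_{r\to 0}\int_{\Y}g_{k}\,d\pi^{(r)}_{x}
\]
exists and defines a version of $d\rho_{k}/d\mu$.

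Next I would establish tightness of $\{\pi^{(r)}_{x}\}_{r}$ as $r\to 0$ for $\mu$-a.e.~$x$. Since~$\Y$ is Polish, I choose compacts $K_{m}\subset\Y$ with $\pi(\X\times K_{m}^{c})\leq 4^{-m}$. Each measure $A\mapsto \pi(A\times K_{m}^{c})$ on~$\X$ is absolutely continuous with respect to~$\mu$, so Assumption~\ref{as:LebesgueDiff} yields $h_{m}(x):=\lim_{r\to 0}\pi^{(r)}_{x}(K_{m}^{c})$ on a set of full $\mu$-measure, with $\int h_{m}\,d\mu\leq 4^{-m}$. By Markov's inequality, $\mu(\{h_{m}\geq 2^{-m}\})\leq 2^{-m}$, and Borel--Cantelli produces a set $\X'\subset\spt\mu$ of full $\mu$-measure on which $h_{m}(x)\to 0$ as $m\to\infty$. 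Hence, for each $x\in\X'$ and $\delta>0$ there exist $m$ and $r_{0}(x)>0$ with $\pi^{(r)}_{x}(K_{m}^{c})<\delta$ for all $r<r_{0}(x)$, which is the desired tightness of the net.

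Setting $\X_{0}:=\X'\cap\bigcap_{k}\X_{k}$ (of full $\mu$-measure), any subsequential weak limit of $\pi^{(r)}_{x}$ as $r\to 0$ integrates each~$g_{k}$ to $f_{k}(x)$; since $(g_{k})$ is convergence-determining, the limit is unique, so $\pi_{x}:=\lim_{r\to 0}\pi^{(r)}_{x}$ exists weakly on~$\X_{0}$. After defining $\pi_{x}$ arbitrarily off~$\X_{0}$, the map $x\mapsto\int g_{k}\,d\pi_{x}=f_{k}(x)$ is Borel for each~$k$, hence $x\mapsto \pi_{x}(C)$ is Borel first for open~$C$ by Portmanteau and approximation via the~$g_{k}$, and then for all Borel~$C$ by a monotone class argument. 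Finally, the identity
\[
\int_{A}\!\int_{\Y}g_{k}\,d\pi_{x}\,\mu(dx)=\int_{A}f_{k}\,d\mu=\rho_{k}(A)=\int_{A\times\Y}g_{k}\,d\pi
\]
for $A\in\cB(\X)$, together with monotone class, shows that $\pi_{x}$ is a regular conditional probability of~$\pi$ given~$x$. The main obstacle is that Assumption~\ref{as:LebesgueDiff} differentiates only one measure at a time, whereas weak convergence of $\pi^{(r)}_{x}$ requires controlling uncountably many test functions on a single $x$-null set; the decisive idea is to split the task into countably many differentiation statements---the family $(g_{k})$ for the values and the compacts $(K_{m})$ for tightness---so that a single Borel--Cantelli argument absorbs all exceptional sets.
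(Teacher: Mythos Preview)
Your argument is correct, but the paper's proof takes a noticeably shorter route. Rather than constructing the limit measure from scratch, the paper fixes at the outset a regular conditional probability $\hat{\pi}_{x}$ (which exists because $\X,\Y$ are Polish) and shows directly that $\pi^{(r)}_{x}\to\hat{\pi}_{x}$ weakly. For each test function $\phi$ in a countable convergence-determining class, one simply observes that
\[
\int \phi\,d\pi^{(r)}_{x}=\frac{1}{\mu(B_{r}(x))}\int_{B_{r}(x)}\Big(\int\phi\,d\hat{\pi}_{x'}\Big)\,\mu(dx'),
\]
which is exactly the ratio in Assumption~\ref{as:LebesgueDiff} for the pair $f\,d\mu\ll d\mu$ with $f(x')=\int\phi\,d\hat{\pi}_{x'}$; a single application of the assumption per test function then finishes the proof. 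Because the target $\hat{\pi}_{x}$ is known in advance to be a probability measure, the convergence-determining property yields weak convergence immediately and no tightness argument is needed, nor is any separate verification of measurability or of the disintegration identity. Your approach, by contrast, builds $\pi_{x}$ ex nihilo: you differentiate once per test function to pin down the integrals $f_{k}(x)$, then need a second family of differentiations (the tails $\pi(\cdot\times K_{m}^{c})$) plus Borel--Cantelli to manufacture tightness, and finally must check measurability and the conditional-probability identity by hand. This is all sound, and has the mild conceptual advantage of not invoking the abstract existence of regular conditional probabilities, but it is substantially longer; the paper's trick of using $\hat{\pi}_{x}$ as a ready-made target is the economical move you missed.
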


\begin{proof}
As $\X,\Y$ are Polish, there exists some regular conditional probability~$\hat{\pi}_{x}$;
it suffices to show that $\lim_{r\to0} \pi^{(r)}_x=\hat{\pi}_{x}$ weakly for $\mu$-a.e.\ $x\in\X$. 
Fix a countable collection $\cC$ of nonnegative bounded continuous test functions~$\phi:\Y\to\R$ that determine weak convergence (cf.\ \cite[Theorem~6.6, p.\,47]{Parthasarathy.67}).
Then we need to show, for fixed $\phi\in\cC$, that 
\begin{align}\label{eq:RCP}
\int \phi\,d\pi^{(r)}_x\to \int \phi\,d\hat{\pi}_{x} \quad\mbox{for $\mu$-a.e.\ $x\in\X$. }
\end{align}
As $\hat{\pi}_{x}$ is a regular conditional probability, it holds for $\mu$-a.e.\ $x\in\X$ that 
\begin{align*}%
\int \phi\,d\pi^{(r)}_x &= \frac{1}{\mu(B_r(x))} \int_{B_r(x)} \mu(dx')\int \phi(y) \,\hat{\pi}_{x'}(dy) \\
&= \frac{1}{\mu(B_r(x))} \int_{B_r(x)} f \, d\mu, \qquad f(x'):=\int \phi(y) \,\hat{\pi}_{x'}(dy).
\end{align*}
We now apply Assumption~\ref{as:LebesgueDiff} to the pair $f \, d\mu \ll d\mu$ and deduce that the right-hand side converges to $f(x)$ for $\mu$-a.e.\ $x\in\X$, which is~\eqref{eq:RCP}.
\end{proof}

The next result is the main ingredient for the second step as sketched above: the sets $\{x_{i}\}\times U_{i}$ constructed in Lemma~\ref{le:forAcContradict} will be ``blown up'' to rectangles in the proof of Proposition~\ref{pr:limitAbsCont} and used to show by contraposition that $\pi\ll R$.

\begin{lemma}\label{le:forAcContradict}
  Let $\pi\in\Pi(\mu,\nu)$ and let $\pi=\mu(dx)\otimes \pi_{x}(dy)$ be a disintegration. If $\pi\not\ll \mu\otimes\nu$, then  $$
    \mu^{2}\{(x_{1},x_{2})\in\X^{2}:\, \exists  U_{1},U_{2}\in\cB(\Y) \mbox{ with } \pi_{x_{i}}(U_{ i})>0, \pi_{x_{i}}(U_{i+1})=0\}>0.
  $$
  In addition, the sets $U_{i}$ can be chosen to be disjoint, of arbitrarily small diameter, and such that $\pi_{x_{i}}(\partial U_{j})=\nu(\partial U_{j})=0$ for $i,j\in\{1,2\}$.
\end{lemma}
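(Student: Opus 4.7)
The plan is to exploit a short Fubini calculation to produce the pairs $(x_1, x_2)$ and the accompanying sets $U_i$. Starting from the hypothesis $\pi \not\ll \mu \otimes \nu$, by regularity on the Polish product space I fix a compact set $E \subset \X \times \Y$ with $\pi(E) > 0$ and $(\mu \otimes \nu)(E) = 0$. Writing $E_x := \{y : (x,y) \in E\}$, Fubini gives $\nu(E_x) = 0$ for $\mu$-a.e.\ $x$, while $A := \{x : \pi_x(E_x) > 0\}$ has $\mu(A) > 0$ because $\pi(E) > 0$. The key step is the identity, valid for every fixed $x_2$,
\[
\int \pi_{x_1}(E_{x_2}) \,\mu(dx_1) \;=\; \nu(E_{x_2}) \;=\; 0,
\]
which forces $\pi_{x_1}(E_{x_2}) = 0$ for $\mu$-a.e.\ $x_1$. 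A second Fubini upgrades this to $\pi_{x_1}(E_{x_2}) = 0$ for $\mu^2$-a.e.\ $(x_1, x_2)$. Restricting to $A \times A$ and setting $U_i := E_{x_i}$ then yields $\pi_{x_i}(U_i) > 0$ and $\pi_{x_i}(U_{i+1}) = 0$ on a positive-$\mu^2$-measure set of pairs, which establishes the bare claim.

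For disjointness I pass from $U_i$ to $U_i \setminus (U_1 \cap U_2)$; the excised piece sits in $U_{i+1}$, so it carries zero $\pi_{x_i}$-mass and the four mass conditions persist. For arbitrarily small diameter I exploit separability of $\Y$: decompose $\Y$ into countably many Borel cells of diameter below any prescribed $\epsilon$, and for each $i$ pick a cell in which $U_i$ still has positive $\pi_{x_i}$-mass, then replace $U_i$ by its trace in that cell. Disjointness and the mass conditions are preserved under this restriction.

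The most delicate part, and what I expect to be the main obstacle, is the boundary requirement $\pi_{x_i}(\partial U_j) = \nu(\partial U_j) = 0$ for all $i,j \in \{1,2\}$. This calls for a topological regularization of the $U_i$ to sets whose boundaries are null for the composite measure $\rho := \nu + \pi_{x_1} + \pi_{x_2}$. Applying Lemma \ref{le:continuitySets}(iv) to $\rho$ and $U_i$ produces an open $V_i$ with $\rho(\partial V_i) = 0$ (killing all six boundary conditions simultaneously) and $\rho(V_i \Delta U_i) < \epsilon$; however, this alone only yields $\pi_{x_{i+1}}(V_i) < \epsilon$ a priori, not the exact equality we need. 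My plan to close this gap is to locate a point $y_i \in \supp(\pi_{x_i}|_{U_i}) \setminus \supp(\pi_{x_{i+1}})$---whose existence for $\mu^2$-a.e.\ good pair should follow from the singular structure inherited from $E$ via an auxiliary Fubini on the $\nu$-singular parts $\pi_x^s$---and to take $V_i := B_{r_i}(y_i)$ a small open ball, with $r_i$ chosen via Lemma \ref{le:continuitySets}(ii) so that $\rho(\partial B_{r_i}(y_i)) = 0$ and $r_i$ small enough that $B_{r_i}(y_i) \cap \supp(\pi_{x_{i+1}}) = \emptyset$. Verifying that such $y_i$ can be selected measurably for a positive-measure set of pairs is where the real work lies; everything preceding it is essentially bookkeeping around a Fubini identity.
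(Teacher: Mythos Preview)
Your Fubini argument for the bare claim is essentially identical to the paper's: pick a witness set $A$ (you take it compact, the paper leaves it Borel), observe that $\nu(A_x)=0$ forces $\pi_{x'}(A_x)=0$ for $\mu$-a.e.\ $x'$, upgrade by Fubini to $\mu^2$-a.e.\ pairs, intersect with $W\times W$ where $W=\{x:\pi_x(A_x)>0\}$, and set $U_i=A_{x_i}$. The disjointness and small-diameter steps are also the same modulo cosmetics.

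For the boundary requirement the paper's route is much simpler than your support-based plan: apply Lemma~\ref{le:continuitySets}(iv) with $\rho=\pi_{x_1}+\pi_{x_2}+\nu$ to obtain $\rho$-continuous approximants $U''_i$ of the disjoint $U'_i$, then set $U_i:=U''_i\setminus U''_{i+1}$. This restores disjointness and, via Lemma~\ref{le:continuitySets}(i), the null-boundary condition, while $\pi_{x_i}(U_i)>0$ survives for small~$\eps$. Your worry is legitimate, though: this subtraction yields only $\pi_{x_i}(U_{i+1})<\eps$, not $=0$. Your proposed alternative via supports cannot repair this in general, because $\supp(\pi_{x_{i+1}})$ may be all of~$\Y$. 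Concretely, take $\X=\Y=[0,1]$, $\mu=\nu=\mathrm{Leb}$ and $\pi_x=\tfrac12\mathrm{Leb}+\tfrac12\delta_x$; then $\pi\not\ll\mu\otimes\nu$, yet any $U_1$ with $\pi_{x_2}(U_1)=0$ must be Lebesgue-null, hence has empty interior, so $\partial U_1=\overline{U_1}\supset U_1$ and $\pi_{x_1}(\partial U_1)\geq\pi_{x_1}(U_1)>0$. Thus no ball around a point of $\supp(\pi_{x_i}|_{U_i})\setminus\supp(\pi_{x_{i+1}})$ exists, and indeed the ``in addition'' clause taken literally is unattainable here.

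What rescues everything is that the sole consumer, Proposition~\ref{pr:limitAbsCont}, only needs $\pi_{x_i}(U_{i+1})/\pi_{x_i}(U_i)$ below a threshold governed by~$\alpha_*$, and $\alpha_*$ is fixed by $x_i,y_i,r_0$ \emph{before} the $\eps$-approximation of Lemma~\ref{le:continuitySets}(iv) is invoked. So: drop your support detour, adopt the paper's subtraction $U_i=U''_i\setminus U''_{i+1}$, and either relax the conclusion to ``$\pi_{x_i}(U_{i+1})$ arbitrarily small'' or note explicitly that this weaker form suffices downstream. Your measurable-selection worries then evaporate.
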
 

\begin{proof}
  Let $\pi\not\ll \mu\otimes\nu$; that is, there exists a set $A\in\cB(\X\times\Y)$ with $\pi(A)>0$ and $(\mu\otimes\nu)(A)=0$. Let $A_{x}=\{y:\, (x,y)\in A\}$ denote the $x$-section, then $\nu(A_{x})=0$ for $\mu$-a.e.\ $x\in\X$. On the other hand, any $B\in\cB(\Y)$ satisfies $\nu(B)=\int_{\X} \pi_{x'}(B)\,\mu(dx')$. In particular, $\nu(A_{x})=0$ implies that $\pi_{x'}(A_{x})=0$ for $\mu$-a.e.\ $x'\in\X$. Therefore, 
  $$
    F=\{(x,x')\in\X^{2}:\, \pi_{x'}(A_{x})=0\} \quad \mbox{has full measure } \mu^{2}.
  $$
  As $\pi(A)>0$, the set $W=\{x:\,\pi_{x}(A_{x})>0\}$ satisfies $\mu(W)>0$ and hence $\mu^{2}((W\times W) \cap F)>0$. For $(x_{1},x_{2})\in (W\times W) \cap F$ we have $\pi_{x_{i}}(A_{x_{i}})>0$ and $\pi_{x_{i}}(A_{x_{i+1}})=0$. In particular, the disjoint sets $U'_{i}:=A_{x_{i}}\setminus A_{x_{i+1}}$ satisfy $\pi_{x_{i}}(U'_{i})>0$ and $\pi_{x_{i}}(U'_{i+1})=0$. By intersecting with a suitable ball, the diameter of $U'_{i}$ can be assumed to be arbitrarily small. Finally, let $\rho=\pi_{x_{1}}+\pi_{x_{2}}+\nu$ and choose $\rho$-continuous sets $U''_{i}$ for $U'_{i}$ as in Lemma~\ref{le:continuitySets}\,(iv), with $\eps>0$ small enough such that the sets $U_{i}:=U''_{i}\setminus U''_{i+1}$ have the required properties; cf.\ Lemma~\ref{le:continuitySets}\,(i).
\end{proof}

The next lemma establishes that the two sides of~\eqref{eq:absContIdea} are comparable with a bound related to the equivalence $R_{n}\sim P_{n}$. The following notation is useful: when $k\geq 1$ and a $k$-tuple $(z_{1},\dots, z_{k})\in(\X\times\Y)^{k}$ are given, and 
\begin{equation}\label{eq:barDefn}
 \mbox{if} \quad z_{i}=(x_{i},y_{i})\in \X\times\Y, \quad\mbox{we set} \quad \bz_{i}:=(x_{i},y_{i+1}),
\end{equation}
with the cyclical convention~$y_{k+1}:=y_{1}$.

\begin{lemma}\label{le:monotoneIntegrated}
  Let $\pi\in\Pi(\mu,\nu)$ and $\pi\ll R\sim P:=\mu\otimes\nu$. Consider rectangles $A_{i}\in\cB(\X)\times\cB(\Y)$ for $1\leq i\leq k$ and denote $\bar{A}_{1}\times \dots \times \bar{A}_{k}:=\{(\bz_{1},\dots,\bz_{k}): z_{i}\in A_{i}\}$.
 For some $\alpha,\bar{\alpha}>0$, suppose that $dR/dP\leq \alpha$ on $A_{i}$ and $(dR/dP)^{-1}\leq \bar{\alpha}$ on $\bar{A}_{i}$, for all~$i$. Then
  $$
    \pi^{k}(A_{1}\times \dots \times A_{k}) \leq (\alpha\bar{\alpha})^{k} \pi^{k}(\bar{A}_{1}\times \dots \times \bar{A}_{k}).
  $$ 
\end{lemma}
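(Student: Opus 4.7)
The plan is to exploit the product structure of $\pi^k$ on rectangles, invoke cyclical invariance of $(\pi,R)$ (the standing assumption of Section~\ref{se:absCont}), and then absorb the mismatch between $R$ and $P$ via the pointwise density bounds. Writing $A_i = F_i \times G_i$ and noting $\bar A_i = F_i \times G_{i+1}$ (from the definition of $\bar z_i$), both sides factor as products: $\pi^k(A_1\times\cdots\times A_k) = \prod_i \pi(A_i)$ and $\pi^k(\bar A_1\times\cdots\times\bar A_k) = \prod_i \pi(\bar A_i)$, so it suffices to show $\prod_i \pi(A_i) \leq (\alpha\bar\alpha)^k \prod_i \pi(\bar A_i)$.

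Let $f := dR/dP$, $g := d\pi/dP$ and $h := d\pi/dR$, so $g = fh$. First I would express $\prod_i \pi(A_i) = \int_{\prod A_i} \prod_i h(z_i) \, dR^k$ and apply cyclical invariance of $(\pi,R)$ to replace the integrand pointwise by $\prod_i h(x_i,y_{i+1})$. Rewriting $dR^k = \prod_i f(x_i,y_i)\,dP^k$, the hypothesis $f\leq\alpha$ on $A_i$ gives
\[ \prod_i \pi(A_i) \leq \alpha^k \int_{\prod A_i} \prod_i h(x_i,y_{i+1}) \, dP^k. \]
Since $(x_i, y_{i+1}) \in F_i\times G_{i+1} = \bar A_i$ whenever $(x_i,y_i)\in A_i$, the hypothesis $f^{-1}\leq\bar\alpha$ on $\bar A_i$ lets me estimate $h(x_i,y_{i+1}) = g(x_i,y_{i+1})/f(x_i,y_{i+1}) \leq \bar\alpha\, g(x_i,y_{i+1})$, upgrading the bound to $(\alpha\bar\alpha)^k \int_{\prod A_i} \prod_i g(x_i,y_{i+1}) \, dP^k$.

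To finish, I would perform the cyclic relabeling $y'_i := y_{i+1}$. This is a permutation of the $y$-coordinates, and since $\nu^k$ is invariant under permutations, the integration region transforms from $y_i \in G_i$ to $y'_i \in G_{i+1}$, i.e.\ to $\prod_i \bar A_i$, while $g(x_i,y_{i+1}) = g(x_i,y'_i)$. The resulting integral factors as $\prod_i \pi(\bar A_i)$, yielding the claim.

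The main subtlety I expect is conceptual: cyclical invariance of $(\pi,R)$ is indispensable, although it appears only implicitly through the section's standing setup rather than as a hypothesis in the lemma statement. Without it, even the trivial case $R = P$ with $\alpha = \bar\alpha = 1$ reduces to $\prod_i \pi(A_i) \leq \prod_i \pi(\bar A_i)$, which is false for generic couplings (e.g.\ for $\pi$ concentrated near the diagonal of $\{0,1\}^2$). Beyond that, everything is bookkeeping with the cyclic relabeling, keeping track of which density bound applies on which integration region.
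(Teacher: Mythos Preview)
Your argument is correct and follows essentially the same route as the paper: both write $\pi^{k}(\prod A_i)$ as an integral of $\prod_i (d\pi/dR)(z_i)$ against $R^k$, use cyclical invariance to replace $z_i$ by $\bar z_i$ in the integrand, absorb the density bounds $dR/dP\le\alpha$ on $A_i$ and $(dR/dP)^{-1}\le\bar\alpha$ on $\bar A_i$, and exploit the permutation invariance of $P^k$ under $y_i\mapsto y_{i+1}$ to land on $\prod_i \pi(\bar A_i)$. The only difference is the order in which you apply the cyclical invariance and the density bounds (and that you finish via $g=d\pi/dP$ rather than converting back to $R^k$), which is immaterial; your observation that cyclical invariance of $(\pi,R)$ is an implicit standing hypothesis of the section, not part of the lemma statement, is exactly right and is also how the paper uses it.
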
 

\begin{proof}
  Set $Z=d\pi/dR$ and note that
  \begin{equation}\label{eq:Pinvariant}
    P(dz_{1})\cdots P(dz_{k}) = \mu(dx_{1})\cdots \mu(dx_{k})\nu(dy_{1})\cdots \nu(dy_{k})=P(d\bz_{1})\cdots P(d\bz_{k}).
  \end{equation}
  Using the cyclical invariance of $(\pi,R)$ and the rectangular form of $A_{i}$,
  \begin{align*}
    &\pi^{k}(A_{1}\times \dots \times A_{k}) \\
    & = \int_{A_{1}\times \dots \times A_{k}} Z(z_{1})\cdots Z(z_{k})\,\frac{dR}{dP}(z_{1})\cdots \frac{dR}{dP}(z_{k})\, P(dz_{1})\cdots P(dz_{k}) \\
    & \leq \alpha^{k}\int_{A_{1}\times \dots \times A_{k}} Z(z_{1})\cdots Z(z_{k})\, P(dz_{1})\cdots P(dz_{k}) \\
    & = \alpha^{k}\int_{A_{1}\times \dots \times A_{k}} Z(\bz_{1})\cdots Z(\bz_{k})\, P(d\bz_{1})\cdots P(d\bz_{k}) \\
    & = \alpha^{k}\int_{\bar{A}_{1}\times \dots \times \bar{A}_{k}} Z(z_{1})\cdots Z(z_{k})\, P(dz_{1})\cdots P(dz_{k}) \\
    & \leq (\alpha\bar{\alpha})^{k} \int_{\bar{A}_{1}\times \dots \times \bar{A}_{k}} Z(z_{1})\cdots Z(z_{k})\,  R(dz_{1})\cdots R(dz_{k}) \\
    & = (\alpha\bar{\alpha})^{k} \pi^{k}(\bar{A}_{1}\times \dots \times \bar{A}_{k}).\qedhere
  \end{align*}
\end{proof}

We can now prove the main result of this section.

\begin{proposition}\label{pr:limitAbsCont}
  Let $(\mu_{n},\nu_{n})\in\cP(\X)\times\cP(\Y)$, let $\pi_{n}\in\Pi(\mu_{n},\nu_{n})$ and $R_{n}\sim P_{n}:=\mu_{n}\otimes\nu_{n}$. Suppose that $(\pi_{n},R_{n})$ is cyclically invariant for each~$n$ and that $\pi_{n}$ converges weakly to some limit $\pi$. In particular, $\mu_{n},\nu_{n}$ converge to some limits $\mu,\nu$, and $\pi\in\Pi(\mu,\nu)$. Set $P= \mu\otimes\nu$ and suppose that 
$(R_{n},P_{n})$ are uniformly locally equivalent in the following sense: there are versions $\frac{dR_{n}}{dP_{n}}: \X\times\Y\to(0,\infty)$ of the relative densities and constants $\alpha_{n}>0$ such that 
given $z\in\spt P$, there exist $r=r(z)>0$ and $n_{0}=n_{0}(z,r)$ with 
  \begin{equation}\label{eq:condForlimitAbsCont}
   \sup_{z'\in B_{r}(z),\,n\geq n_{0}} \bigg(\alpha_{n}\frac{dR_{n}}{dP_{n}}(z') + \alpha_{n}^{-1}\frac{dP_{n}}{dR_{n}}(z')\bigg)<\infty.
  \end{equation}
  Then $\pi\ll P$.
\end{proposition}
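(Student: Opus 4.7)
The plan is to argue by contraposition: combine Lemma~\ref{le:monotoneIntegrated} with a Portmanteau-type limit passage to transfer a two-rectangle inequality from $\pi_{n}$ to the weak limit $\pi$, and then use Lemma~\ref{le:forAcContradict} together with the conditional differentiation of measures in Lemma~\ref{le:conditionalDiffOfMeas} to derive a contradiction should $\pi$ fail to be absolutely continuous with respect to $P$.

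\textbf{Step 1 (uniform prelimit inequality).} Apply Lemma~\ref{le:monotoneIntegrated} with $k=2$ to the cyclically invariant pair $(\pi_{n},R_{n})$ and rectangles $A_{i}=F_{i}\times G_{i}$, $i=1,2$; the notation~\eqref{eq:barDefn} then yields $\bar{A}_{1}=F_{1}\times G_{2}$ and $\bar{A}_{2}=F_{2}\times G_{1}$, so the lemma gives
$$
\pi_{n}(F_{1}\times G_{1})\,\pi_{n}(F_{2}\times G_{2})\le (\alpha\bar{\alpha})^{2}\,\pi_{n}(F_{1}\times G_{2})\,\pi_{n}(F_{2}\times G_{1}).
$$
I would localize each of the four rectangles in a small ball around a point of $\spt P$. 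Hypothesis~\eqref{eq:condForlimitAbsCont} then allows $\alpha\le C/\alpha_{n}$ on the $A_{i}$ and $\bar{\alpha}\le C\alpha_{n}$ on the $\bar{A}_{i}$ for all sufficiently large $n$; the factors of $\alpha_{n}$ cancel in the product, giving an $n$-independent bound $(\alpha\bar{\alpha})^{2}\le M$.

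\textbf{Step 2 (pass to the weak limit).} Using Lemma~\ref{le:continuitySets}(ii), pick each $F_{i}$ to be $\mu$-continuous and each $G_{j}$ to be $\nu$-continuous. Then by Lemma~\ref{le:continuitySets}(iii) every product $F_{i}\times G_{j}$ is $\pi$-continuous, and the Portmanteau theorem gives $\pi_{n}(F_{i}\times G_{j})\to \pi(F_{i}\times G_{j})$. Passing to the limit in the inequality of Step~1 yields
$$
\pi(F_{1}\times G_{1})\,\pi(F_{2}\times G_{2})\le M\,\pi(F_{1}\times G_{2})\,\pi(F_{2}\times G_{1}).
$$

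\textbf{Step 3 (contradiction).} Suppose $\pi\not\ll P$. Lemma~\ref{le:forAcContradict} provides points $x_{1},x_{2}\in\spt\mu$ (which may be taken in the full-measure set $\X_{0}$ from Lemma~\ref{le:conditionalDiffOfMeas}) together with disjoint, $\nu$-continuous sets $U_{1},U_{2}$ of arbitrarily small diameter such that $\pi_{x_{i}}(U_{i})>0$, $\pi_{x_{i}}(U_{i+1})=0$ and $\pi_{x_{i}}(\partial U_{j})=0$. Set $G_{j}:=U_{j}$ and $F_{i}:=B_{r}(x_{i})$ with $r$ small and $\mu(\partial B_{r}(x_{i}))=0$. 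Since $\spt \pi_{x_{i}}\subset\spt\nu$ for $\mu$-a.e.\ $x_{i}$, one can pick $y_{j}\in U_{j}\cap\spt\nu$, so all four rectangles $F_{i}\times G_{j}$ sit in $B_{r}((x_{i},y_{j}))$ with $(x_{i},y_{j})\in\spt P$ (using the product metric), and Step~1 applies. Dividing the inequality of Step~2 by $\mu(B_{r}(x_{1}))\mu(B_{r}(x_{2}))$ and sending $r\to 0$, Lemma~\ref{le:conditionalDiffOfMeas} gives $\pi(B_{r}(x_{i})\times U_{j})/\mu(B_{r}(x_{i}))\to\pi_{x_{i}}(U_{j})$ for every $i,j$. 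The left side therefore tends to $\pi_{x_{1}}(U_{1})\pi_{x_{2}}(U_{2})>0$, while the right side tends to $M\,\pi_{x_{1}}(U_{2})\pi_{x_{2}}(U_{1})=0$, a contradiction.

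The main obstacle will be threading several measure-theoretic constraints through Step~3 simultaneously: the centers $(x_{i},y_{j})$ must lie in $\spt P$ so that the uniform bounds of Step~1 are available; the boundaries $\partial F_{i}$ and $\partial G_{j}$ must be null for Portmanteau; and the $x_{i}$ must belong to the full-measure differentiation set of Lemma~\ref{le:conditionalDiffOfMeas}. Each requirement excludes only a null set, so they can be enforced jointly, but some care is needed to bookkeep continuity and localization choices without losing the positivity of $\pi_{x_{i}}(U_{i})$.
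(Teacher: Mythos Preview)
Your proof is correct and follows essentially the same approach as the paper's: both combine Lemma~\ref{le:monotoneIntegrated} (with $k=2$), Lemma~\ref{le:forAcContradict}, Lemma~\ref{le:conditionalDiffOfMeas}, and a Portmanteau passage on continuity rectangles in the same way, differing only in whether the contradiction is reached at the level of $\pi_{n}$ (paper) or after passing to $\pi$ and letting $r\to0$ (you). One small notational fix in Step~3: the ball in which $F_{i}\times G_{j}$ must sit for~\eqref{eq:condForlimitAbsCont} to apply should have a \emph{fixed} radius $r_{0}$ (determined by the four centers $(x_{i},y_{j})\in\spt P$), not the shrinking radius $r$ of $F_{i}=B_{r}(x_{i})$; you then need $U_{j}\subset B_{r_{0}}(y_{j})$ and $r\le r_{0}$, which is exactly what the ``arbitrarily small diameter'' clause of Lemma~\ref{le:forAcContradict} provides.
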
 

\begin{proof}
  Note that the weak convergence of $\pi_{n}\in\Pi(\mu_{n},\nu_{n})$ implies the weak convergence of its marginals to some limits $\mu$ and $\nu$, and then $\pi\in\Pi(\mu,\nu)$. By Lemma~\ref{le:conditionalDiffOfMeas}, there is a set $\X_{0}\subset\spt\mu$ of full $\mu$-measure such that for $x\in\X_{0}$, the weak limit
  $$
    \pi_{x}(\cdot):=\lim_{r\to0} \frac{\pi(B_{r}(x)\times \cdot)}{\mu(B_{r}(x))}
  $$
  exists, and $\pi=\mu\otimes_{x} \pi_{x}$ is a disintegration of~$\pi$.  In particular, for any $x\in\X_{0}$ and any $\pi_{x}$-continuity set $U$,
  \begin{equation}\label{eq:limitRCPD}
    \pi_{x}(U) = \lim_{r\to0} \frac{\pi(B_{r}(x)\times U)}{\mu(B_{r}(x))}.
  \end{equation}
  Suppose for contradiction that $\pi\not\ll \mu\otimes\nu$. Then Lemma~\ref{le:forAcContradict} yields $x_{i}\in\X_{0}$ ($i=1,2$) and disjoint sets $U_{i}$ such that 
  $$
  q:=\min_{i} \pi_{i}(U_{i})>0\quad \mbox{and}\quad \pi_{i}(U_{i+1})=0, \quad\mbox{where }\pi_{i}:=\pi_{x_{i}}.
  $$
  Moreover, given $r_{0}>0$, the sets $U_{i}$ can be chosen to be $(\pi_{1}+\pi_{2}+\nu)$-continuous and contained in a ball $B_{r_{0}}(y_{i})$ around some $y_{i}\in \spt\nu$. Using also~\eqref{eq:condForlimitAbsCont}, we can choose $r_{0},n_{0}$ such that for some $\alpha_{*}>0$, 
  \begin{equation}\label{eq:limitAbsContConsts}
    \alpha_{n}\frac{dR_{n}}{dP_{n}}\leq \alpha_{*}\quad \mbox{on } B_{r_{0}}(x_{i})\times U_{i},
    \qquad \alpha_{n}^{-1}\frac{dP_{n}}{dR_{n}}\leq \alpha_{*} \quad \mbox{on } B_{r_{0}}(x_{i})\times U_{i+1},
  \end{equation}
  for all $1\leq i \leq k$ and $n\geq n_{0}$.
  Writing $B_{i,r}:=B_{r}(x_{i})$ for brevity, \eqref{eq:limitRCPD} implies in particular that 
  \begin{equation*}
    \frac{\pi(B_{i,r}\times U_{i})}{\mu(B_{i,r})}\to  \pi_{i}(U_{i})\geq q, \quad \frac{\pi(B_{i,r}\times U_{i+1})}{\mu(B_{i,r})} \to \pi_{i}(U_{i+1}) =0.
  \end{equation*}  
  That is, given $\eps>0$, choosing $r\leq r_{0}$ small enough results in 
  \begin{equation}\label{eq:limitAbsContProof}
    \frac{\pi(B_{i,r}\times U_{i})}{\mu(B_{i,r})} \geq q-\eps, \quad \frac{\pi(B_{i,r}\times U_{i+1})}{\mu(B_{i,r})} \leq \eps,
  \end{equation}
  and we may further choose $r$ such that $\mu(\partial B_{i,r})=0$. %
   Specifically, we choose $\eps>0$ such that $\eps/(q-\eps)<\alpha_{*}^{-2}$, then~\eqref{eq:limitAbsContProof} implies
  $$
    \prod_{i=1,2} \pi(B_{i,r}\times U_{i}) > \alpha_{*}^{4} \prod_{i=1,2} \pi(B_{i,r}\times U_{i+1}).
  $$  
  Note that $B_{i,r}\times U_{j}$ is a $\pi$-continuity set for $i,j\in\{1,2\}$; cf. Lemma~\ref{le:continuitySets}\,(iii). In view of $\pi_{n}\to \pi$, it then follows that
  \begin{equation}\label{eq:limitAbsContProofContrad}
    \prod_{i=1,2} \pi_{n}(B_{i,r}\times U_{i}) > \alpha_{*}^{4} \prod_{i=1,2} \pi_{n}(B_{i,r}\times U_{i+1})
  \end{equation} 
  for $n$ sufficiently large. %
  On the other hand, we apply Lemma~\ref{le:monotoneIntegrated} with $k=2$ and $A_{i}=B_{i,r}\times U_{i}$. In view of~\eqref{eq:limitAbsContConsts}, the condition of the lemma holds with $\alpha:=\alpha_{n}^{-1}\alpha_{*}$ and $\bar\alpha:=\alpha_{n}\alpha_{*}$. Noting that the $\alpha_{n}$ cancel to yield $(\alpha\bar\alpha)^{k}=\alpha_{*}^{4}$, the lemma yields the inequality opposite to~\eqref{eq:limitAbsContProofContrad}, a contradiction.
\end{proof}

\section{Cyclical Invariance of Limits}\label{se:InvarOfLimits}

In this section we aim to show that limits of cyclically invariant couplings are again cyclically invariant, under suitable conditions. We proceed in two steps. First, we establish that limits are weakly cyclically invariant as defined below. Second, we show that weak cyclical invariance already implies cyclical invariance. This second step has little to do with the passage to the limit; rather, it settles some measure-theoretic aspects to get rid of pesky nullsets.

The ``weak'' notion is introduced mainly to disentangle the proof of the main result. It weakens in two ways the cyclical invariance of $(\pi,R)$ as stated in Definition~\ref{de:cyclInv}: the equivalence of~$\pi$ and $R$ is reduced to absolute continuity and the  cyclical relation only holds for points from specific sets. We recall the notation~$\bz_{i}$ from~\eqref{eq:barDefn}.

\begin{definition}\label{de:weaklyCyclInv}
  Let $\pi\in\Pi(\mu,\nu)$ and $R\in\cP(\X\times\Y)$. We call $(\pi,R)$ \emph{weakly cyclically invariant} if $\pi\ll R\sim P:=\mu\otimes\nu$ and there exist
  \begin{enumerate}
  \item a version $Z:\X\times\Y\to[0,\infty]$ of the density $d\pi/dR$,
  \item $\Omega_{1},\Omega_{0}\in\cB(\X\times\Y)$ with $\pi(\Omega_{1})=R(\Omega_{0})=1$ and $0<Z<\infty$ on $\Omega_{1}$
  \end{enumerate} 
 such that for all $k\in\N$,
 \begin{equation*}
  \prod_{i=1}^{k} Z(z_{i})= \prod_{i=1}^{k} Z(\bz_{i}) \quad \mbox{for all $(z_{i})_{i=1}^{k} \subset \Omega_{1}$ with $(\bz_{i})_{i=1}^{k} \subset \Omega_{0}$.}
\end{equation*}  
\end{definition}

\begin{proposition}\label{pr:limitEquiv}
  Let $(\mu_{n},\nu_{n})\in\cP(\X)\times\cP(\Y)$, let $\pi_{n}\in\Pi(\mu_{n},\nu_{n})$ and $R_{n}\sim P_{n}:=\mu_{n}\otimes\nu_{n}$. Suppose that $(\pi_{n},R_{n})$ is cyclically invariant for each~$n$ and that $\pi_{n},R_{n}$ converge weakly to some limits $\pi,R$. In particular, $\mu_{n},\nu_{n}$ converge to some limits $\mu,\nu$, and $\pi\in\Pi(\mu,\nu)$. Suppose that $R\sim P:=\mu\otimes\nu$ and that there are versions $f_{n},f: \Omega\to (0,\infty)$ of the densities $\frac{dR_{n}}{dP_{n}},\frac{dR}{dP}$ and constants $\alpha_{n}>0$ such that for any fixed $z\in\spt R$,
  \begin{equation}\label{eq:densityConvCond}
    f_{n}(z')=[1+o(1)]\alpha_{n} f(z),
  \end{equation}
  where $o(1)$ stands for a function $\phi_{z}(z',n)\to0$ as $d(z',z)+1/n\to0$.
  Then $(\pi,R)$ is weakly cyclically invariant. Specifically, the quantities $\Omega_{1},\Omega_{0},Z$ of Definition~\ref{de:weaklyCyclInv} can be chosen as
  \begin{align*}
    \Omega_{0} &= \left\{z\in\spt R:\, Z(z):=  \lim_{r\to 0} \frac{\pi(B_{r}(z))}{R(B_{r}(z))} \mbox{ exists in }[0,\infty)\right\}\\
    & \quad \cap \left\{z\in\spt P:\, f(z)=\lim_{r\to 0} \frac{R(B_{r}(z))}{P(B_{r}(z))}\right\}
  \end{align*}
  and $\Omega_{1}=\Omega_{0}\cap\{Z>0\}$.
\end{proposition}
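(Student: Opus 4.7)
The plan is to first establish $\pi\ll R$ and define a pointwise version $Z$ of the density $d\pi/dR$ via differentiation of measures, and then verify the product relation of Definition~\ref{de:weaklyCyclInv} by blowing up the points $(z_i)$ to small balls, exploiting the cyclical invariance of $(\pi_n,R_n)$ at the integrated level, and taking the limits $n\to\infty$ and then $r\to 0$. For the setup, \eqref{eq:densityConvCond} immediately implies the hypothesis \eqref{eq:condForlimitAbsCont} of Proposition~\ref{pr:limitAbsCont}, hence $\pi\ll P$, and also $\pi\ll R$ since $R\sim P$. Assumption~\ref{as:LebesgueDiff} applied separately to $\pi\ll R$ and to $R\ll P$ shows that the two limits defining $\Omega_0$ exist on a set of full $R$-measure and that $Z$ there agrees with $d\pi/dR$. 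Consequently $\pi(\Omega_0)=1$, and since $d\pi/dR>0$ $\pi$-a.s., also $\pi(\Omega_1)=1$, with $0<Z<\infty$ on $\Omega_1$; this verifies items~(i)--(ii) of Definition~\ref{de:weaklyCyclInv}.

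For the main identity, fix $k\in\N$ and $(z_i)_{i=1}^k\subset\Omega_1$ with $(\bar z_i)_{i=1}^k\subset\Omega_0$, where $z_i=(x_i,y_i)$ and $\bar z_i=(x_i,y_{i+1})$. In the product metric set $A^r_i:=B_r(z_i)=B_r(x_i)\times B_r(y_i)$ and $\bar A^r_i:=B_r(\bar z_i)=B_r(x_i)\times B_r(y_{i+1})$. Writing $Z_n:=d\pi_n/dR_n$ and $f_n:=dR_n/dP_n$, the cyclical invariance of $(\pi_n,R_n)$ gives $\prod_i Z_n(z_i)=\prod_i Z_n(\bar z_i)$ pointwise; integrating this identity over $\prod_i A^r_i$ against $P_n^{\otimes k}$ and using that the cyclical shift $(z_i)\mapsto(\bar z_i)$ is $P_n^{\otimes k}$-measure preserving (cf.\ \eqref{eq:Pinvariant}) yields
\[
 \prod_{i=1}^k \int_{A^r_i} Z_n\,dP_n \;=\; \prod_{i=1}^k \int_{\bar A^r_i} Z_n\,dP_n.
\]
Given $\eta>0$, applying \eqref{eq:densityConvCond} at the $2k$ base points provides $r_0,N_0$ such that $f_n\in[1-\eta,1+\eta]\,\alpha_n f(z_i)$ on $A^r_i$ and $f_n\in[1-\eta,1+\eta]\,\alpha_n f(\bar z_i)$ on $\bar A^r_i$ for all $r\leq r_0$ and $n\geq N_0$. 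Writing $\pi_n(A^r_i)=\int_{A^r_i} Z_n f_n\,dP_n$ and similarly for $\bar A^r_i$, the factors $\alpha_n^k$ cancel between the two sides, giving for some constant $C=C(k)$ the sandwich
\[
 \textstyle(1-C\eta)\,\prod_i f(z_i)\prod_i \pi_n(\bar A^r_i)\;\leq\;\prod_i f(\bar z_i)\prod_i \pi_n(A^r_i)\;\leq\;(1+C\eta)\,\prod_i f(z_i)\prod_i \pi_n(\bar A^r_i).
\]

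To conclude, take $r\leq r_0$ so that each $A^r_i,\bar A^r_i$ is a $\pi$-continuity set (Lemma~\ref{le:continuitySets}(ii)--(iii)), pass $n\to\infty$ in the sandwich via $\pi_n\to\pi$ weakly, divide by $\prod_i R(A^r_i)>0$, and let $r\to 0$. By the definition of $\Omega_0$: $\pi(A^r_i)/R(A^r_i)\to Z(z_i)$, $\pi(\bar A^r_i)/R(\bar A^r_i)\to Z(\bar z_i)$, $R(A^r_i)/P(A^r_i)\to f(z_i)$ and $R(\bar A^r_i)/P(\bar A^r_i)\to f(\bar z_i)$, while $\prod_i P(A^r_i)=\prod_i P(\bar A^r_i)$ because $(y_i)\mapsto(y_{i+1})$ merely permutes the second marginals. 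Letting finally $\eta\to 0$ and cancelling the positive finite factor $\prod_i f(\bar z_i)$ gives $\prod_i Z(z_i)=\prod_i Z(\bar z_i)$, i.e., weak cyclical invariance of $(\pi,R)$. The principal obstacle is to interleave the limits $n\to\infty$ and $r\to 0$ in the presence of arbitrary constants $\alpha_n$ that need not converge; the key mechanism is that recasting finite-$n$ cyclical invariance as an integrated identity between $P_n$-integrals of $Z_n$ lets the $\alpha_n$ factors cancel algebraically, reducing the density-level problem to a purely measure-theoretic one amenable to weak convergence and differentiation of measures.
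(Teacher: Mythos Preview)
Your proof is correct and follows essentially the same approach as the paper's: both establish $\pi\ll R$ via Proposition~\ref{pr:limitAbsCont}, set up $\Omega_0,\Omega_1,Z$ through Assumption~\ref{as:LebesgueDiff}, and then compare $\prod_i \pi_n(A_i^r)$ with $\prod_i \pi_n(\bar A_i^r)$ by writing $\pi_n=Z_nf_n\,dP_n$, exploiting the $P_n^{\otimes k}$-invariance of the cyclical shift and the pointwise identity $\prod_i Z_n(z_i)=\prod_i Z_n(\bar z_i)$, controlling $f_n$ via~\eqref{eq:densityConvCond}, and passing to the limit first in $n$ (through continuity sets) and then in $r$ (via differentiation of measures). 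The only cosmetic difference is that the paper works with the ratio $\pi_n(\bm{\bar A})/\pi_n(\bm{A})$ and $o(1)$-notation, whereas you keep an explicit $\eta$-sandwich; your formulation has the minor advantage of not needing to check $\pi_n(A_i^r)>0$ for large~$n$ before dividing.
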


\begin{proof}
  Assumption~\ref{as:LebesgueDiff} for the space $\X\times\Y$ shows that $Z$ is a version of $d\pi/dR$ and $R(\Omega_{0})=1$. As~\eqref{eq:densityConvCond} implies~\eqref{eq:condForlimitAbsCont}, Proposition~\ref{pr:limitAbsCont} yields that $\pi\ll P$, hence $\pi\ll R$ and the definition of~$\Omega_{1}$ implies $\pi(\Omega_{1})=1$.

  Let $z_{1},\dots, z_{k}\in\Omega_{1}$ be such that $\bz_{i}\in\Omega_{0}$ and consider for $r>0$ the balls $A_{i}=A_{i}^{(r)}:=B_{r}(z_{i})=B_{r}(x_{i})\times B_{r}(y_{i})$. To avoid unwieldy formulas, we use the vector notation
  \begin{align*}
    \bm{z}=(z_{1},\dots, z_{k}), \quad\! \bm{\bar{z}}=(\bz_{1},\dots, \bz_{k}),
    \quad\! 
    \bm{A} = A_{1}\times\cdots \times A_{k}, \quad\! \bm{\bar{A}} =\{\bm{\bar{z}}: \bm{z}\in \bm{A}\}
  \end{align*} 
  together with the convention that functions and measures are evaluated by multiplication over the components, for instance
  \begin{align*}
    f(\bm{z})=f(z_{1})\cdots f(z_{k}), \quad \pi(\bm{A}) = \pi(A_{1})\cdots \pi(A_{k}).
  \end{align*} 
  As $d\pi_{n}/dP_{n}=(d\pi_{n}/dR_{n}) (dR_{n}/dP_{n})$, we then have 
  $$
    \pi_{n}(\bm{A}) 
     = \int_{\bm{A}} Z_{n}(\bm{z}')\,f_{n}(\bm{z}')\, P_{n}(d\bm{z}'), \quad Z_{n}:=d\pi_{n}/dR_{n}.
   $$
   We also have $\pi_{n}(A_{i})>0$ for~$n$ large as $Z(z_{i})>0$ and $\pi_{n}\to \pi$.
   On the other hand, $Z_{n}(\bm{z}')$ and $P_{n}(d\bm{z}')$  are invariant under~$\bm{z}'\mapsto \bm{\bar{z}}'$ due to the assumption on $\pi_{n}$ and the form of $P_{n}$; cf.~\eqref{eq:Pinvariant}. Thus
  \begin{align*}
    \pi_{n}(\bm{\bar{A}}) 
     = \int_{\bm{\bar{A}}} Z_{n}(\bm{z}')\,f_{n}(\bm{z}')\, P_{n}(d\bm{z}')
     = \int_{\bm{A}} Z_{n}(\bm{z}')\,f_{n}(\bm{\bar{z}}')\, P_{n}(d\bm{z}').
   \end{align*}
   Applying the assumption on $f_{n}$ to each of the points $z_{i},\bz_{i}$ then yields
   \begin{align*} 
    \frac{\pi_{n}(\bm{\bar{A}})}{\pi_{n}(\bm{A})}
   &= \frac{\int_{\bm{A}} Z_{n}(\bm{z}')\,f_{n}(\bm{\bar{z}}')\, P_{n}(d\bm{z}')}
    {\int_{\bm{A}} Z_{n}(\bm{z}')\,f_{n}(\bm{z}')\, P_{n}(d\bm{z}')}\\
   &= \frac{[1+o(1)]\alpha_{n} f(\bm{\bar{z}}) \int_{\bm{A}} Z_{n}(\bm{z}')\, P_{n}(d\bm{z}')}
    {[1+o(1)]\alpha_{n} f(\bm{z}) \int_{\bm{A}} Z_{n}(\bm{z}')\, P_{n}(d\bm{z}')}
    = [1+o(1)]\frac{f(\bm{\bar{z}})}{f(\bm{z})}
  \end{align*}
  where $o(1)$ stands for a function of $(\bm{z},\bm{\bar{z}})$ converging to zero as $r+1/n\to0$.
   For values of $r>0$ such that $A_{i},\bar{A}_{i}$ are continuity sets of~$P$ (and hence also of~$\pi$ and~$R$), taking $n\to\infty$ yields
   \begin{align}\label{eq:proofLimitInvar1} 
    \frac{\pi(\bm{\bar{A}})}{\pi(\bm{A})}
   & = [1+o(1)]\frac{f(\bm{\bar{z}})}{f(\bm{z})}.
  \end{align}   
   On the other hand, $z_{i},\bz_{i}\in\Omega_{0}$ also guarantees that
  $
   \frac{R(A_{i})}{P(A_{i})} =  [1+o(1)] f({z}_{i})
  $ 
  and similarly for~$\bz_{i}$. Recalling  $P(\bm{\bar{A}})=P(\bm{A})$, we deduce
  \begin{align}\label{eq:proofLimitInvar2}
    \frac{R(\bm{\bar{A}})}{R(\bm{A})}
   & = \frac{R(\bm{\bar{A}})/P(\bm{\bar{A}})}{R(\bm{A})/P(\bm{A})}
   = [1+o(1)]\frac{f(\bm{\bar{z}})}{f(\bm{z})}.
  \end{align} 
  Combining~\eqref{eq:proofLimitInvar1} and~\eqref{eq:proofLimitInvar2} yields
  $$
    \frac{\pi(\bm{A})}{R(\bm{A})} = [1 + o(1)] \frac{\pi(\bm{\bar{A}})}{R(\bm{\bar{A}})}
  $$  
  and then letting $r\to 0$ (along a sequence of $r$ such that $A_{i},\bar{A}_{i}$ are continuity sets of~$P$) shows $Z(\bm{z})=Z(\bm{\bz})$, as desired.
\end{proof} 

\begin{remark}\label{rk: }
  Assumption~\eqref{eq:densityConvCond} on $f_{n},f$ in Proposition~\ref{pr:limitEquiv} is a sufficient condition for %
  \begin{align}\label{eq:limitEquivCrucial}
   \lim_{r\to0}\lim_{n\to\infty} \frac{\int_{\bm{A}} Z_{n}(\bm{z}')\,f_{n}(\bm{\bar{z}}')\, P_{n}(d\bm{z}')}
    {\int_{\bm{A}} Z_{n}(\bm{z}')\,f_{n}(\bm{z}')\, P_{n}(d\bm{z}')}
   =\frac{f(\bm{\bar{z}})}{f(\bm{z})};
  \end{align}
  it can be replaced by any other condition implying~\eqref{eq:limitEquivCrucial}. We note that~\eqref{eq:limitEquivCrucial} can be seen as a differentiation of measures intertwined with a weak limit. 
\end{remark} 

As mentioned above, the second step is to upgrade the weak cyclical invariance. Some of these considerations are similar to arguments in the proofs of \cite{BackhoffBeiglbockConforti.21}, where it is shown by variational arguments that minimizers of certain static Schr\"odinger bridge problems admit a factorization. %

\pagebreak[2]

\begin{proposition}\label{pr:weakIsStrong}
  Let $\pi\in\Pi(\mu,\nu)$ and let $R\in\cP(\X\times\Y)$ satisfy $R\sim P:=\mu\otimes\nu$. The following are equivalent:
  \begin{enumerate}
  \item $(\pi,R)$ is cyclically invariant,    
  \item $(\pi,R)$ is weakly cyclically invariant,
  \item there exist Borel functions $\varphi:\X\to(0,\infty)$ and $\psi:\Y\to(0,\infty)$ such that $(x,y)\mapsto \varphi(x)\psi(y)$ is a version of the density $d\pi/dR$.
  \end{enumerate}
\end{proposition}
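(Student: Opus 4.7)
The plan is to establish the cycle $(i) \Rightarrow (ii) \Rightarrow (iii) \Rightarrow (i)$. The implications $(i) \Rightarrow (ii)$ and $(iii) \Rightarrow (i)$ are immediate. For $(i) \Rightarrow (ii)$, take $\Omega_1 = \Omega_0 = \X \times \Y$ and use the positive version of $d\pi/dR$ from Definition~\ref{de:cyclInv}. For $(iii) \Rightarrow (i)$, the function $Z(x,y) := \varphi(x)\psi(y)$ is a positive Borel version of $d\pi/dR$ (so $\pi \sim R$), and the required identity follows from the cyclic symmetry of the product $\prod_i \psi(y_i)$:
\begin{equation*}
\prod_{i=1}^{k} Z(x_i, y_i) = \prod_i \varphi(x_i) \prod_i \psi(y_i) = \prod_i \varphi(x_i) \prod_i \psi(y_{i+1}) = \prod_i Z(x_i, y_{i+1}).
\end{equation*}

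For $(ii) \Rightarrow (iii)$, I would construct $\varphi, \psi$ through a base point. Since $R \sim P$ forces $P(\Omega_0) = 1$, Fubini yields that the sets $\X_* := \{x : \nu(\{y : (x,y) \in \Omega_0\}) = 1\}$ and $\Y_* := \{y : \mu(\{x : (x,y) \in \Omega_0\}) = 1\}$ have full $\mu$- and $\nu$-measure, whence $\pi(\X_* \times \Y_*) = 1$ by the marginal identities. Pick $(x_0, y_0) \in \Omega_1 \cap (\X_* \times \Y_*)$ (a $\pi$-conull set) and define $\varphi(x) := Z(x, y_0)$ and $\psi(y) := Z(x_0, y)/Z(x_0, y_0)$. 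Applying the $k=2$ cyclical identity to $z_1 = (x, y)$ and $z_2 = (x_0, y_0)$ (so $\bar z_1 = (x, y_0)$, $\bar z_2 = (x_0, y)$) gives $Z(x, y) = \varphi(x)\psi(y)$ on the set $E := \Omega_1 \cap (\X'' \times \Y'')$, where $\X'' := \{x : (x, y_0) \in \Omega_0\}$ (of full $\mu$-measure since $y_0 \in \Y_*$) and $\Y'' := \{y : (x_0, y) \in \Omega_0\}$ (of full $\nu$-measure since $x_0 \in \X_*$). Thus $\varphi \psi = Z$ on the $\pi$-conull set $E$.

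The principal obstacle is promoting this to an $R$-a.e.\ identity, which amounts to proving $\pi \sim R$ (since $\varphi \psi > 0$ wherever the identity holds). I would argue by contradiction: assume $R(N) > 0$ for $N := \{Z = 0\} \cap \Omega_0$. For any $(x^*, y^*) \in N$, taking $\bar z_1 = (x^*, y^*)$ in the $k=2$ identity shows that no pair $(y_1, x_2)$ with $(x^*, y_1), (x_2, y^*) \in \Omega_1$ and $(x_2, y_1) \in \Omega_0$ can exist; otherwise $0 = Z(x^*, y^*) Z(x_2, y_1) = Z(x^*, y_1) Z(x_2, y^*) > 0$. Consequently $\{x : (x, y^*) \in \Omega_1\} \times \{y : (x^*, y) \in \Omega_1\} \subset \Omega_0^c$, and $P(\Omega_0^c) = 0$ forces one of these two sections to be a null set. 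Splitting $N$ by which factor vanishes and using Fubini on $R(N) > 0$ yields, say, a $\nu$-positive set of $y^*$ with $\mu(\{x : (x, y^*) \in \Omega_1\}) = 0$. On the other hand, writing $k := d\pi/dP$ (well-defined since $\pi \ll R \sim P$), the marginal identity $\int k(x, y)\, d\mu(x) = 1$ for $\nu$-a.e.\ $y$ combined with $\pi(\Omega_1) = 1$ forces $k(x, y^*) = 0$ for $\mu$-a.e.\ $x$ at such $y^*$, contradicting the marginal identity. Hence $\pi \sim R$, so $R(E) = 1$ and $\varphi \psi = Z$ holds $R$-a.e.

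To conclude, I would redefine $\varphi$ to equal $1$ on $\{x : \varphi(x) \notin (0, \infty)\}$ and $\psi$ similarly. These sets are $\mu$- and $\nu$-null: on the $R$-conull set $E$ we have $\varphi(x)\psi(y) = Z(x, y) \in (0, \infty)$, so the projections of $E$ onto $\X$ and $\Y$ are $\mu$- and $\nu$-conull and $\varphi, \psi$ are positive and finite there. The modification occurs on a $P$-null set and hence preserves $\varphi \psi = Z$ $R$-a.e., yielding the positive Borel functions $\varphi : \X \to (0, \infty)$ and $\psi : \Y \to (0, \infty)$ required by $(iii)$.
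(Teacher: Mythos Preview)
Your proof is correct, and for the nontrivial implication $(ii)\Rightarrow(iii)$ it takes a genuinely different route from the paper's. Both arguments begin the same way: choose a base point $(x_0,y_0)\in\Omega_1$ whose horizontal and vertical sections through $\Omega_0$ have full marginal measure (the paper packages this as Lemma~\ref{le:connectingPoint}), define $\varphi(x)=Z(x,y_0)$ and $\psi(y)=Z(x_0,y)/Z(x_0,y_0)$, and use the $k=2$ identity to obtain $Z=\varphi\psi$ on a $\pi$-conull set $E\subset\Omega_1$. The divergence is in how the identity is extended to an $R$-conull set. The paper argues directly on $\Omega_0$: for $(x,y)\in\Omega_0$ it picks auxiliary points $(x,y'),(x',y)\in\Omega_1$ (available since the marginal projections of $\Omega_1$ are full) and applies the $k=3$ identity with $z_1=(x,y'),\,z_2=(x',y),\,z_3=(x_*,y_*)$ to force $Z(x,y)=\varphi(x)\psi(y)$. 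You instead isolate the equivalence $\pi\sim R$ as a separate step, proved by contradiction: if $Z$ vanished on a set $N\subset\Omega_0$ of positive $R$-measure, then for $(x^*,y^*)\in N$ the $k=2$ identity forces the rectangle $\{x:(x,y^*)\in\Omega_1\}\times\{y:(x^*,y)\in\Omega_1\}$ into the $P$-null set $\Omega_0^c$, and a Fubini argument against the marginal constraint $\int k(\cdot,y)\,d\mu=1$ yields the contradiction. Once $\pi\sim R$, the $\pi$-conull set $E$ is already $R$-conull and no further extension is needed.

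The paper's $k=3$ step is slightly more direct and avoids the detour through marginals, while your argument has the appeal of making $\pi\sim R$ an explicit intermediate conclusion (which in the paper only emerges a posteriori from the positivity of $\varphi\psi$). Both yield the factorization on an $R$-conull set with the same amount of work.
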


The implications $(i)\Rightarrow(ii)$ and $(iii)\Rightarrow(i)$ are immediate. The fact that  $(i)\Rightarrow(iii)$ is also well known, cf.\ \cite{BorweinLewis.92}, but will not be used directly. For the proof of $(ii)\Rightarrow(iii)$, a measure-theoretic fact will be useful. Given a set~$A\subset\X\times\Y$ with $P(A)=1$, Lemma~\ref{le:connectingPoint} below states that within any set~$B$ of positive measure we can find a point~$(x_{*},y_{*})$ which acts like an airline hub for~$A$: any two points of~$A$ are connected through $(x_{*},y_{*})$, modulo marginal nullsets. In particular, any trip can be achieved with at most one stopover, and we may stop within~$B$. (The bound of one is optimal as the set $A$ need not be a rectangle; in fact, $A$ may fail to contain any measurable rectangle of positive measure \cite[Exercise~5.4, p.\,74]{Falconer.86}.) Lemma~\ref{le:connectingPoint} is refinement of \cite[Lemma~4.3]{BeiglbockGoldsternMareschSchachermayer.09} which asserts the connectedness of $A$ in the sense of~\cite{BorweinLewis.92}---in our analogy, connectedness means that any trip between two points of~$A$ can be achieved with finitely many stopovers at some points in~$A$.

\begin{lemma}\label{le:connectingPoint}
   Let $P=\mu\otimes\nu$ and let $A,B\subset\X\times\Y$ be Borel sets with $P(B)>0$ and $P(A)=1$. There are Borel sets $\X_{0}\subset\X$ and $\Y_{0}\subset \Y$ with $\mu(\X_{0})=\nu(\Y_{0})=1$ such that setting $A_{0}:=A\cap (\X_{0}\times\Y_{0})$ and  $B_{0}:=B\cap (\X_{0}\times\Y_{0})$, there exists a point 
\begin{center}   
   $(x_{*},y_{*})\in B_{0}$ such that $(x,y_{*}), (x_{*},y)\in A_{0}$ for any $(x,y)\in \X_{0}\times\Y_{0}$.
 \end{center}  
\end{lemma}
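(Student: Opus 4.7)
The plan is to locate a single well-chosen point $(x_*,y_*) \in A \cap B$ whose vertical and horizontal sections through~$A$ are both of full marginal measure, and then shrink $\X$ and $\Y$ to precisely those sections (intersected with the Fubini sets).

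First, set $A_x := \{y : (x,y)\in A\}$ and $A^y := \{x : (x,y)\in A\}$. By Fubini's theorem applied to $\1_A$ with $P=\mu\otimes\nu$ and $P(A)=1$, the Borel sets
\[
  \X_1 := \{x\in\X : \nu(A_x)=1\}, \qquad \Y_1 := \{y\in\Y : \mu(A^y)=1\}
\]
satisfy $\mu(\X_1)=\nu(\Y_1)=1$, so $P(\X_1\times\Y_1)=1$. Since $P(B)>0$ and $P(A)=1$, we have $P(A\cap B\cap(\X_1\times\Y_1))=P(B)>0$; pick any point $(x_*,y_*)$ in this set. In particular $(x_*,y_*)\in A\cap B$, $\nu(A_{x_*})=1$, and $\mu(A^{y_*})=1$.

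Next, define
\[
  \X_0 := \X_1 \cap A^{y_*}, \qquad \Y_0 := \Y_1 \cap A_{x_*}.
\]
Both are Borel, and the choice of $x_*\in\X_1$ and $y_*\in\Y_1$ gives $\mu(\X_0)=\nu(\Y_0)=1$. Because $(x_*,y_*)\in A$, we have $x_*\in A^{y_*}$ and $y_*\in A_{x_*}$, so in fact $x_*\in\X_0$ and $y_*\in\Y_0$; combined with $(x_*,y_*)\in B$ this places $(x_*,y_*)$ in $B_0$.

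Finally, verify the hub property. For any $(x,y)\in\X_0\times\Y_0$: since $x\in A^{y_*}$ we have $(x,y_*)\in A$, and trivially $(x,y_*)\in \X_0\times\Y_0$, hence $(x,y_*)\in A_0$; symmetrically, $y\in A_{x_*}$ gives $(x_*,y)\in A_0$. I expect no significant obstacle here — the only point requiring care is to insist that $(x_*,y_*)$ itself lies in~$A$ (not merely in $B\cap(\X_1\times\Y_1)$), which is what forces $(x_*,y_*)\in \X_0\times\Y_0$ and hence in $B_0$; this is exactly why we intersected with~$A$ in the selection step above.
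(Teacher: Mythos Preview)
Your proof is correct and follows essentially the same approach as the paper's: use Fubini to locate full-measure sets of ``good'' sections, choose a hub point $(x_*,y_*)$ in the appropriate intersection, and then restrict to the sections through that point. The only cosmetic difference is that you select $(x_*,y_*)$ in a single step from $A\cap B\cap(\X_1\times\Y_1)$, whereas the paper first picks $x_*$ and then $y_*$; your symmetric presentation is arguably cleaner, and your explicit remark that $(x_*,y_*)\in A$ is needed to force $(x_*,y_*)\in\X_0\times\Y_0$ is a nice clarification.
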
 

\begin{proof}
  Let $C_{x}= \{y: (x,y)\in C\}$ denote the section of a set $C\subset\X\times\Y$ at $x\in\X$, and analogously for $y\in\Y$. Let $\X_{1}=\{x\in\X: \nu(A_{x})=1\}$. In view of Fubini's theorem, $P(A)=1$ implies $\mu(\X_{1})=1$. Similarly, $P(B)>0$ implies that $\{x\in\X: \nu(B_{x})>0\}$ has positive $\mu$-measure. In particular, there exists a point $x_{*}\in\X_{1}$ with $\nu(B_{x_{*}})>0$. 
  
  Next, let $\Y_{0}=\{y\in\Y: \mu(A_{y})=1\} \cap A_{x_{*}}$. Then again $\nu(\Y_{0})=1$, and in particular there exists a point $y_{*}\in\Y_{0}\cap B_{x_{*}}$. Moreover, the set $\X_{0}:=\X_{1}\cap A_{y_{*}}$ satisfies $\mu(\X_{0})=1$. By passing to Borel subsets of full measure, we may assume that $\X_{0},\Y_{0}$ are themselves Borel.
  
  Writing $A_{0}=A\cap (\X_{0}\times\Y_{0})$ and  $B_{0}=B\cap (\X_{0}\times\Y_{0})$, we have by construction  that $(x_{*},y_{*})\in B_{0}$ satisfies $(x_{*},y)\in A_{0}$ for all $y\in\Y_{0}$ and $(x,y_{*})\in A_{0}$ for all $x\in\X_{0}$. 
\end{proof} 

\begin{proof}[Proof of Proposition~\ref{pr:weakIsStrong} $(ii)\Rightarrow(iii)$.]
  Let $Z,\Omega_{1},\Omega_{0}$ be as in Definition~\ref{de:weaklyCyclInv}. Our aim is to find Borel functions $\varphi:\X\to(0,\infty)$ and $\psi:\Y\to(0,\infty)$ such that $Z'(x,y):=\varphi(x)\psi(y)$ defines a version of the density $d\pi/dR$. It is sufficient to construct $\varphi$ on a Borel set $\X_{0}\subset\X$ of full marginal measure, as we may then extend~$\varphi$ by setting~$\varphi=1$ on~$\X\setminus \X_{0}$, and similarly for~$\psi$. In particular, we may assume that $\proj_{\X}\Omega_{1}=\X$ and $\proj_{\Y}\Omega_{1}=\Y$.
  
  Noting that $P(\Omega_{1})>0$ due to  $\pi\ll P$, we can apply Lemma~\ref{le:connectingPoint} to~$\Omega_{1}$ and~$\Omega_{0}$, and in view of the above observation, we may assume that $\X_{0}=\X$ and $\Y_{0}=\Y$ in its assertion. We then obtain a point
  \begin{center}   
   $(x_{*},y_{*})\in \Omega_{1}$ such that $(x,y_{*}), (x_{*},y)\in\Omega_{0}$ for any $(x,y)\in \X\times\Y$.
 \end{center}  
  Define $\varphi(x_{*}):=a>0$ as an arbitrary number and $\psi(y_{*}):=Z(x_{*},y_{*})/\varphi(x_{*})$. 
 Given $(x,y)\in\X\times\Y$, we have $(x_{*},y)\in\Omega_{0}$ and $(x,y_{*})\in\Omega_{0}$, allowing us to define
  $$
    \psi(y):=Z(x_{*},y)/\varphi(x_{*}) \in (0,\infty), \quad \varphi(x):=Z(x,y_{*})/\psi(y_{*})\in (0,\infty).
  $$
  The fact that~$Z$ is Borel readily implies that $\varphi,\psi$ are Borel.
  Define $Z'(x,y):=\varphi(x)\psi(x)$ for $(x,y)\in\X\times\Y$. Clearly $Z'$ is Borel and takes values in~$(0,\infty)$.  
  Let $(x,y)\in\Omega_{1}$. Then $(x,y_{*}), (x_{*},y)\in\Omega_{0}$ and weak cyclical invariance yields
  $$
    \varphi(x)\psi(y)=\frac{Z(x,y_{*})}{\psi(y_{*})}\frac{Z(x_{*},y)}{\varphi(x_{*})} = \frac{Z(x,y_{*}) Z(x_{*},y)} {Z(x_{*},y_{*})} = Z(x,y).
  $$
  That is, $Z'=Z$ on $\Omega_{1}$. To prove the same relation on $\Omega_{0}$, consider $(x,y)\in \Omega_{0}$. There are  $x'\in\X$ and $y'\in\Y$ such that 
  $z_{1}:=(x,y')$ and $z_{2}:=(x',y)$ are in~$\Omega_{1}$, and of course we also have $z_{3}:=(x_{*},y_{*})\in\Omega_{1}$. 
  Thus  the already established fact that $Z'=Z$ on $\Omega_{1}$ implies 
  $$
    \varphi(x)\psi(y)=\frac{\varphi(x)\psi(y') ~ \varphi(x')\psi(y) ~ \varphi(x_{*})\psi(y_{*})} {\varphi(x')\psi(y_{*}) ~ \varphi(x_{*})\psi(y') } = \frac{Z(x,y') Z(x',y) Z(x_{*},y_{*})}{Z(x',y_{*})Z(x_{*},y')}.
  $$
  On the other hand, $\bz_{1}=(x,y)$ and $\bz_{2}=(x',y_{*})$ and $\bz_{3}=(x_{*},y')$ are all in~$\Omega_{0}$, so that the invariance yields
  $$
    0< Z(x,y') Z(x',y) Z(x_{*},y_{*}) =  Z(x,y)Z(x',y_{*})Z(x_{*},y').
  $$
  As a result, $Z'(x,y)=\varphi(x)\psi(y)=Z(x,y)$, showing $Z'=Z$ on $\Omega_{0}$. Recalling $R(\Omega_{0})$=1, it follows that $Z'$ is again a version of $d\pi/dR$.
\end{proof}

\section{Proof of Main Results and Ramifications}\label{se:proofOfMain}

For ease of reference, we first summarize some known results.

\begin{lemma}\label{le:uniquenessAndMinimizer}
  Let $(\mu,\nu)\in\cP(\X)\times\cP(\Y)$, $R\in\cP(\X\times\Y)$ and $R\sim P:=\mu\otimes\nu$.
  \begin{enumerate}
  \item If the static Schr\"odinger bridge problem~\eqref{eq:SB} is finite, it admits a unique minimizer~$\pi\in\Pi(\mu,\nu)$. Moreover, $(\pi,R)$ is cyclically invariant.
  \item
  Let $\pi\in\Pi(\mu,\nu)$. If $(\pi,R)$ is cyclically invariant and~\eqref{eq:SB} is finite, then $\pi$ is its minimizer.
    \item
  There exists at most one $\pi\in\Pi(\mu,\nu)$ such that $(\pi,R)$ is cyclically invariant.
\end{enumerate}  
\end{lemma}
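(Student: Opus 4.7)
My plan is to assemble the three statements by combining classical convex-analytic facts about relative entropy with the density-factorization equivalence of Proposition~\ref{pr:weakIsStrong}. Statements (i) and (ii) share inputs and can be handled together; (iii) is the most delicate part, particularly when the Schr\"odinger bridge value is infinite.

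For (i), existence of a minimizer will follow from the direct method: $\Pi(\mu,\nu)$ is weakly compact on Polish spaces (Prokhorov, applied to the tight marginals~$\mu,\nu$), and $H(\cdot\,|R)$ is weakly lower semicontinuous and strictly convex on its finite domain; together with the finiteness hypothesis this yields a unique minimizer~$\pi$. For the cyclical invariance of $(\pi,R)$, I will invoke the classical factorization theorem for the minimizer, namely that $d\pi/dR$ admits a version of the form $(x,y)\mapsto \varphi(x)\psi(y)$ with Borel $\varphi,\psi>0$; this is the line of results by Borwein--Lewis, Csisz\'ar, and R\"uschendorf--Thomsen cited after Proposition~\ref{pr:cyclInv}. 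Given such a factorization, both $\prod_i \varphi(x_i)\psi(y_i)$ and $\prod_i \varphi(x_i)\psi(y_{i+1})$ collapse to $\prod_i \varphi(x_i)\prod_i\psi(y_i)$, so the invariance identity of Definition~\ref{de:cyclInv} is immediate.

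For (ii), suppose $(\pi,R)$ is cyclically invariant and the SB problem is finite. Proposition~\ref{pr:weakIsStrong} supplies the factorization $d\pi/dR=\varphi\psi$. For any competitor $\pi'\in\Pi(\mu,\nu)$ with $H(\pi'|R)<\infty$, I will run the standard identity
\begin{equation*}
   H(\pi'|R)-H(\pi|R) = H(\pi'|\pi) + \int \log(\varphi\psi)\,d(\pi-\pi'),
\end{equation*}
and observe that the trailing integral vanishes because $\pi$ and $\pi'$ share marginals: $\int\log\varphi\,d\pi = \int\log\varphi\,d\mu = \int\log\varphi\,d\pi'$, and similarly for~$\log\psi$. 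Integrability of $\log\varphi$ against~$\mu$ and $\log\psi$ against~$\nu$ is a byproduct of the finite entropies, so $H(\pi'|R)\geq H(\pi|R)$ and $\pi$ minimizes.

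For (iii), the finite-value regime is immediate from (i) and (ii). The genuine obstacle is the infinite-value regime, where no variational comparison is available. Here I will stay entirely at the density level: given two cyclically invariant couplings $\pi_1,\pi_2$, Proposition~\ref{pr:weakIsStrong} produces factorizations $d\pi_i/dR=\varphi_i(x)\psi_i(y)$, and the marginal constraints $\pi_i\in\Pi(\mu,\nu)$ translate into a pair of Schr\"odinger equations (as in Corollary~\ref{co:Schr\"odingerEqn}) for each~$(\varphi_i,\psi_i)$. The classical a.s.-uniqueness of this system up to a multiplicative constant forces $\varphi_2=a\varphi_1$ and $\psi_2=a^{-1}\psi_1$, whence $\pi_1=\pi_2$. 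The hardest piece will be establishing the Schr\"odinger-system uniqueness without any entropy-finiteness hypothesis; my plan is to deliver this by a measure-theoretic ``hub'' argument in the spirit of Lemma~\ref{le:connectingPoint}, pinning down $\log(\varphi_1/\varphi_2)$ and $\log(\psi_1/\psi_2)$ as complementary a.s.-constants via a common reference point at which both factorizations can be evaluated simultaneously.
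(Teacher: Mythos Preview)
Your outlines for (i) and (ii) are essentially the standard route that the paper defers to the literature (it cites \cite{Nutz.20} for both). One caution on~(ii): the sentence ``integrability of $\log\varphi$ against~$\mu$ and $\log\psi$ against~$\nu$ is a byproduct of the finite entropies'' is too quick. The hypothesis only gives $H(\pi_{0}|R)<\infty$ for \emph{some} coupling~$\pi_{0}$; neither $H(\pi|R)<\infty$ nor the separate integrability of $\log\varphi,\log\psi$ is immediate from that, and without the latter you cannot split $\int\log(\varphi\psi)\,d(\pi-\pi')$ along the marginals. This is a known and fixable subtlety (handled in the references the paper cites), but it is a genuine lemma, not a byproduct.

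The real divergence from the paper is in~(iii). You identify the infinite-value regime as the hard case and propose to prove Schr\"odinger-system uniqueness from scratch via a hub argument. The paper instead bypasses the difficulty with a change of reference: if $(\pi,R)$ and $(\pi',R)$ are both cyclically invariant, then $d\pi/d\pi'$ again factorizes, so $(\pi,\pi')$ is cyclically invariant; and the Schr\"odinger bridge problem \emph{with reference measure~$\pi'$} is automatically finite because $\pi'\in\Pi(\mu,\nu)$ gives $H(\pi'|\pi')=0$. Part~(ii), applied with~$\pi'$ in place of~$R$, then forces $\pi$ to be the unique minimizer of $H(\cdot\,|\pi')$ over $\Pi(\mu,\nu)$, which is~$\pi'$. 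This reduction dissolves the finite/infinite dichotomy for the original~$R$ entirely and makes~(iii) a two-line consequence of~(ii). Your hub-based plan, by contrast, is not clearly sufficient: Lemma~\ref{le:connectingPoint} lets you \emph{construct} a factorization from invariance, but it is much less obvious how a single reference point would force two already-given factorizations $(\varphi_{1},\psi_{1})$ and $(\varphi_{2},\psi_{2})$, each satisfying the marginal constraints, to coincide---the natural attempts lead straight back to the same $L^{1}$ issues for $\log(\varphi_{1}/\varphi_{2})$ and $\log(\psi_{1}/\psi_{2})$ that you were trying to avoid. I would replace your plan for~(iii) with the change-of-reference trick.
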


\begin{proof}
  Recall that cyclical invariance of $(\pi,R)$ is equivalent a factorization of the density~$d\pi/dR$ into strictly positive functions; cf.~Proposition~\ref{pr:weakIsStrong} or~\cite{BorweinLewis.92}. Taking that into account, (i)~and~(ii) can be found in \cite[Theorem~2.1]{Nutz.20} in the stated generality. (The original results are due to \cite{BorweinLewis.92, BorweinLewisNussbaum.94, Csiszar.75, FollmerGantert.97, RuschendorfThomsen.97}, among others.) Finally, (iii) follows from~(ii), as was also noted in \cite[Corollary~2.9]{Nutz.20}: if  $\pi,\pi'\in\Pi(\mu,\nu)$ have positive densities $d\pi/dR$, $d\pi'/dR$ admitting factorizations, then $d\pi/d\pi'$ also admits a factorization and now~(ii), applied with $\pi'$ as reference measure, implies that~$\pi$ is the unique minimizer of $H(\cdot|\pi')$. As $\pi'\in\Pi(\mu,\nu)$ is itself a coupling, this minimizer is~$\pi'$.
\end{proof}

\begin{proof}[Proof of Theorems~\ref{th:wellposednessEOT}, \ref{th:stabilityEOT}, \ref{th:wellposedness} and \ref{th:stability}.]
  Given data as in Theorem~\ref{th:stability}, the sequences $(\mu_{n})$ and $(\nu_{n})$ are tight, which readily implies the tightness of~$(\pi_{n})$; cf.\  \cite[Lemma~4.4, p.\,44]{Villani.09}. In view of Propositions~\ref{pr:limitEquiv} and~\ref{pr:weakIsStrong}, any cluster point $\pi$ is such that $(\pi,R)$ is cyclically invariant. The uniqueness of cyclically invariant couplings, see Lemma~\ref{le:uniquenessAndMinimizer}\,(iii), shows that all cluster points coincide and hence that the original sequence $(\pi_{n})$ converges. This proves Theorem~\ref{th:stability}.
  
  To deduce Theorem~\ref{th:stabilityEOT} from Theorem~\ref{th:stability}, we choose the reference measure as in~\eqref{eq:defR}; i.e.,
  $$
    \frac{dR_{n}}{dP_{n}} = a_{n} e^{-c_{n}/\eps_{n}}, \qquad \frac{dR}{dP} = a e^{-c/\eps},
  $$
  where $a_{n},a$ are the normalizing constants. Combining the uniform convergence $c_{n}/\eps_{n}\to c/\eps$ on bounded sets with the continuity of~$c$, we see that~\eqref{eq:mainCond} holds, for instance with $\alpha_{n}=a_{n}/a$. %
  
  The uniqueness part of Theorem~\ref{th:wellposedness}, as well as its last assertion, are stated in Lemma~\ref{le:uniquenessAndMinimizer}. To deduce existence from 
  Theorem~\ref{th:stability}, we consider the constant marginals $(\mu_{n},\nu_{n}):=(\mu,\nu)$ and define approximating reference measures $R_{n}$ via
  $$
    \frac{dR_{n}}{dP} = a_{n} \bigg(\frac{dR}{dP} \vee \frac{1}{n}\bigg),
  $$
  where $a_{n}$ is the (finite) normalizing constant. As $\frac{dR}{dP}$ is continuous and positive, hence bounded away from zero on small balls, the condition~\eqref{eq:mainCond} is satisfied with $P_{n}=P$ and a function~$o(1)$ independent of~$n$. The static Schr\"odinger bridge problem~\eqref{eq:SB} for $R_{n}$ falls into the classical setting of Lemma~\ref{le:uniquenessAndMinimizer}\,(i) because the product coupling $\pi_{0}:=\mu\otimes\nu$ satisfies $H(\pi_{0}|R_{n})<\infty$. In particular, the associated cyclically invariant couplings $\pi_{n}\in\Pi(\mu,\nu)$ exist, and now  Theorem~\ref{th:stability} implies the existence of $\pi\in\Pi(\mu,\nu)$ such that $(\pi,R)$ is cyclically invariant.
  Finally, Theorem~\ref{th:wellposednessEOT} is a direct consequence of Theorem~\ref{th:wellposedness} via~\eqref{eq:defR}. 
\end{proof}

\begin{proof}[Proof of Corollary~\ref{co:Schr\"odingerEqn}]
  Define $R\in\cP(\X\times\Y)$ by $dR/dP:=f$ and let~$\pi$ be as in Theorem~\ref{th:wellposedness}. As $(\pi,R)$ is cyclically invariant, a version of the relative density admits a factorization $d\pi/dR(x,y)=\varphi(x)\psi(y)$ into positive Borel functions; cf.\ Proposition~\ref{pr:weakIsStrong}. The fact that~$\pi$ has marginals $(\mu,\nu)$ then translates to the fact that $(\varphi,\psi)$ solve the Schr\"odinger system. Uniqueness of $\varphi,\psi$ up to a constant follows from the uniqueness of~$\pi$ (here the fact that $R\sim P$ is particularly important).
\end{proof} 

\begin{remark}\label{rk:doubling}
  Assumption~\ref{as:LebesgueDiff} can be replaced by the assumption that the marginals $(\X,\mu)$ and $(\Y,\nu)$ satisfy the so-called doubling property. The latter assumption is structurally different as it refers to the specific measures rather than the metric spaces.
  Indeed, let $(\X,\mu)$ be doubling; i.e., there exist $C>0$ such that
  $$
    \mu(B_{2r}(x))\leq C \mu(B_{r}(x))
  $$
  for any ball $B_{r}(x)\subset\X$. This ensures that differentiation of measures (in the sense of Assumption~\ref{as:LebesgueDiff}) holds for measures $\rho\ll\mu$; cf.\ \cite[Theorem~1.8, p.\,4]{Heinonen.01}. In particular, Lemma~\ref{le:conditionalDiffOfMeas} holds as stated, and then so does Proposition~\ref{pr:limitAbsCont}. If $(\Y,\nu)$ is also doubling, then so is $(\X\times\Y,P)$ where $P=\mu\otimes\nu$, showing that differentiation wrt.~$P$ holds for measures $\rho\ll P$, in particular for $\rho:=R\sim P$ in the context of Proposition~\ref{pr:limitEquiv}. As $dR/dP>0$, it follows that differentiation also holds wrt.~$R$. This ensures that the proof of Proposition~\ref{pr:limitEquiv} remains valid, and hence the main results.
\end{remark} 

\subsection{Instability for Discontinuous Costs}

We show that stability of entropic optimal transport fails as soon as the cost function has an ``essential'' discontinuity. To see why the qualifier is necessary, consider a cost function~$c$ of the form
$$
  c(x,y)=\hat{c}(x,y)-f(x)-g(y)
$$
for some possibly discontinuous functions $f:\X\to\R$, $g:\Y\to\R$ and a continuous function~$\hat{c}$. It is immediate from Definition~\ref{de:cyclInvEOT} that the marginal functions $f$ and $g$ do not affect cyclical invariance. The cost~$c$ is equivalent to~$\hat{c}$ from this perspective; in terms of optimal transport, the interpretation is that $f,g$ change the cost of any coupling by the same constant, and therefore do not change the optimizer. As a result, a discontinuity in~$c$ can only be relevant for stability if~$c$ cannot be written in the above form with a continuous~$\hat{c}$, which is the condition below.

\begin{proposition}\label{pr:discont}
  Let $c$ be a cost function such that $(x,y)\mapsto c(x,y)-f(x)-g(y)$ is discontinuous for any functions $f:\X\to\R$ and $g:\Y\to\R$. Then the stability of entropic optimal transport fails for~$c$. That is, there are marginals $(\mu_{n},\nu_{n})\to(\mu,\nu)$ and $(c,1)$-cyclically invariant couplings $\pi_{n}\in\Pi(\mu_{n},\nu_{n})$ with $\pi_{n}\to\pi\in\Pi(\mu,\nu)$ where $\pi$ is not $(c,1)$-cyclically invariant.
\end{proposition}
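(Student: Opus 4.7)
The plan is organized around the symmetric cross-difference
\begin{equation*}
\delta(x_1,x_2;y_1,y_2) := c(x_1,y_1)+c(x_2,y_2)-c(x_1,y_2)-c(x_2,y_1),
\end{equation*}
which is invariant under any replacement $c\mapsto c-f(x)-g(y)$ and, in a four-point setting, encodes exactly the content of the $k=2$ case of Definition~\ref{de:cyclInvEOT}.

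\textbf{Step 1 (locating a generic discontinuity).} The first and most delicate step is to convert the hypothesis into the existence of a \emph{generic} four-tuple $(x_1^*,x_2^*,y_1^*,y_2^*)$ with $x_1^*\ne x_2^*$, $y_1^*\ne y_2^*$, and an approaching sequence of such generic four-tuples along which $\delta_n:=\delta(x_1^n,x_2^n;y_1^n,y_2^n)$ tends, after passing to a subsequence, to some $\delta_\infty\in[-\infty,\infty]$ with $\delta_\infty\ne\delta_0:=\delta(x_1^*,x_2^*;y_1^*,y_2^*)$. The mechanism is that if $\delta$ were continuous on the open set $U:=\{x_1\ne x_2,\,y_1\ne y_2\}$, then for any basepoint $(x_0,y_0)$ the identity $c(x,y)-c(x,y_0)-c(x_0,y)+c(x_0,y_0)=\delta(x,x_0;y,y_0)$ exhibits $c$ modulo marginals as a restriction of $\delta$; averaging the basepoint over a suitable probability measure on $\X\times\Y$ yields a decomposition $c=\hat c+f+g$ with $\hat c$ continuous, contradicting the hypothesis. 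Hence $\delta$ must be discontinuous somewhere on $U$, and the stated jump at a generic four-tuple follows.

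\textbf{Step 2 (explicit four-point counterexample).} With the data from Step~1 in hand, define the discrete marginals $\mu_n:=\tfrac12(\delta_{x_1^n}+\delta_{x_2^n})$ and $\nu_n:=\tfrac12(\delta_{y_1^n}+\delta_{y_2^n})$, which converge weakly to the analogous $\mu,\nu$. Every $\pi_n\in\Pi(\mu_n,\nu_n)$ is determined by the single scalar $t_n:=\pi_n\{(x_1^n,y_1^n)\}\in[0,\tfrac12]$, with $\pi_n\{(x_2^n,y_2^n)\}=t_n$ and $\pi_n\{(x_i^n,y_j^n)\}=\tfrac12-t_n$ for $i\ne j$. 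By Proposition~\ref{pr:weakIsStrong}, $(c,1)$-cyclical invariance is equivalent to a strictly positive factorization of $d\pi_n/dP_n$ of the form $\varphi(x)\psi(y)e^{-c(x,y)}$, and in this four-point setting it reduces to the single identity $t_n^2=e^{-\delta_n}(\tfrac12-t_n)^2$. The unique solution in $(0,\tfrac12)$ is $t_n=\tfrac{1}{2(1+e^{\delta_n/2})}$, which defines a $(c,1)$-cyclically invariant $\pi_n\in\Pi(\mu_n,\nu_n)$.

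\textbf{Step 3 (the contradiction at the limit).} Since $\pi_n$ has finite support converging to the four atoms $(x_i^*,y_j^*)$, the weak limit is the explicit four-point coupling $\pi\in\Pi(\mu,\nu)$ with $\pi\{(x_1^*,y_1^*)\}=t_\infty:=\tfrac{1}{2(1+e^{\delta_\infty/2})}$ (interpreted as $0$ or $\tfrac12$ if $\delta_\infty=\pm\infty$). Were $\pi$ itself $(c,1)$-cyclically invariant, the same $k=2$ reduction applied in the limit would force $t_\infty=\tfrac{1}{2(1+e^{\delta_0/2})}$, which contradicts $\delta_\infty\ne\delta_0$; in the degenerate case $\delta_\infty=\pm\infty$, the limit coupling even fails $\pi\sim\mu\otimes\nu$, so the definition of cyclical invariance cannot apply at all. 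The main obstacle is Step~1: extracting a genuinely four-dimensional discontinuity of $\delta$ from a hypothesis phrased only modulo marginal corrections; thereafter the construction and the limiting argument are explicit and elementary.
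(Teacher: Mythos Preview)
Your overall strategy—two-atom marginals, the $k=2$ cross-ratio $\delta$, and a contradiction at the limit—is exactly the paper's. Steps~2 and~3 mirror the paper's argument, with the explicit formula $t_n=\tfrac{1}{2(1+e^{\delta_n/2})}$ making the limiting contradiction slightly more transparent than the paper's ratio-matching. You are also right to insist on a \emph{generic} four-tuple in Step~1: the paper fixes an arbitrary basepoint $(x_0,y_0)$, locates a discontinuity of $(x,y)\mapsto c(x,y)-c(x,y_0)-c(x_0,y)$ at some $(x_\infty,y_\infty)$, and then tacitly uses $x_\infty\ne x_0$, $y_\infty\ne y_0$ when asserting that the individual atom-masses of $\pi_n$ converge to those of~$\pi$.

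The gap is in your justification of Step~1. The averaging construction $\hat c(x,y)=\int\delta(x,x_0;y,y_0)\,d\rho(x_0,y_0)$ does not obviously yield a continuous $\hat c$: you would need non-atomic marginals for $\rho$ (unavailable on discrete $\X$ or $\Y$) together with enough local boundedness of $\delta$ to apply dominated convergence, and neither is guaranteed since $c$ is merely a measurable $\R_{+}$-valued function. The claim is nevertheless true, via a direct argument that avoids integration. The additivity identity
\[
\delta(x_1,x_2;y_1,y_2)=\delta(x_1,x_3;y_1,y_2)-\delta(x_2,x_3;y_1,y_2),
\]
together with its $y$-analogue, shows that if $\delta$ is continuous on $U$ then it is continuous on all of $\X^2\times\Y^2$: near a boundary point with $x_1=x_2$, pick any $x_3\ne x_1$ and write $\delta$ as a difference of two terms whose arguments stay in~$U$. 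Once $\delta$ is globally continuous, a single basepoint already exhibits $c(x,y)-c(x,y_0)-c(x_0,y)+c(x_0,y_0)=\delta(x,x_0;y,y_0)$ as continuous, contradicting the hypothesis. This delivers the generic discontinuity you need, and the rest of your plan goes through.
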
 

\begin{proof}
Fix arbitrary $(x_{0},y_{0})\in\X\times\Y$. As $(x,y)\mapsto c(x,y) - c(x,y_{0}) - c(x_{0},y)$ is discontinuous, there is a sequence $(x_{n},y_{n})\to(x_{\infty},y_{\infty})$ such that
\begin{equation}\label{eq:discont1}
  c(x_{n},y_{n})- c(x_{n},y_{0}) - c(x_{0},y_{n}) \: \not\to \:  c(x_{\infty},y_{\infty})- c(x_{\infty},y_{0}) - c(x_{0},y_{\infty}).
\end{equation}
Consider the marginals $\mu_{n}=(\delta_{x_{0}}+\delta_{x_{n}})/2$ and $\nu_{n}=(\delta_{y_{0}}+\delta_{y_{n}})/2$. Let $\pi_{n}\in\Pi(\mu_{n},\nu_{n})$ be $(c,1)$-cyclically invariant; that is,
\begin{equation}\label{eq:discont2}
  \frac{\pi_{n}(x_{0},y_{0})\pi_{n}(x_{n},y_{n})} {\pi_{n}(x_{0},y_{n})\pi_{n}(x_{n},y_{0})} = \exp\big[c(x_{0},y_{n})+c(x_{n},y_{0})  -c(x_{0},y_{0})-c(x_{n},y_{n})\big].
\end{equation}
 After passing to a subsequence, $\pi_{n}$ converge weakly to a coupling $\pi\in\Pi(\mu,\nu)$ of $\mu:=(\delta_{x_{0}}+\delta_{x_{\infty}})/2$ and $\nu:=(\delta_{y_{0}}+\delta_{y_{\infty}})/2$. Suppose for contradiction that~$\pi$ is cyclically invariant, then
\begin{equation}\label{eq:discont3}
  \frac{\pi(x_{0},y_{0})\pi(x_{\infty},y_{\infty})} {\pi(x_{0},y_{\infty})\pi(x_{\infty},y_{0})} 
  = \exp\big[c(x_{0},y_{\infty})+c(x_{\infty},y_{0})  -c(x_{0},y_{0})-c(x_{\infty},y_{\infty})\big].
\end{equation}
As the left-hand side of~\eqref{eq:discont2} converges to the left-hand side of~\eqref{eq:discont3}, the convergence of the right-hand sides follows, contradicting~\eqref{eq:discont1}.
\end{proof} 

\newcommand{\dummy}[1]{}

\end{document}